\newtheoremstyle{theorem2}
{8pt}
{8pt}
{\itshape}
{}
{\bfseries}
{.}
{.5em}
{}
\newtheoremstyle{definition2}
{8pt}
{8pt}
{}
{}
{\bfseries}
{.}
{.5em}
{}
\theoremstyle{theorem2}
\newtheorem{lemma}{Lemma}[section]
\newtheorem{theorem}[lemma]{Theorem}
\newtheorem{corollary}[lemma]{Corollary}
\newtheorem{proposition}[lemma]{Proposition}
\DeclareMathOperator{\tast}{\tilde{\ast}}
\theoremstyle{definition2}
\newtheorem{remark}[lemma]{Remark}
\def\aa{{\mathfrak a}}
\def\bb{{\mathfrak b}}
\def\gg{{\mathfrak g}}
\def\hh{{\mathfrak h}}
\def\real{{\mathbb R}}
\def\HH{{\mathbb H}}
	\def\itemi{\item[{\it i)}]}
\def\itemii{\item[{\it ii)}]}
\title[The Brunn-Minkowski inequality in nilpotent Lie groups]{A direct proof of the Brunn-Minkowski inequality in Nilpotent Lie groups}
\author{Julián Pozuelo
}
\address{Departamento de Geometría y Topología, Facultad de Ciencias, Universidad de Granada, 18071 Granada, Spain}
\email{julipd94@correo.ugr.es}
\thanks{The author would like to thank his Ph.D. advisor Manuel Ritoré for sugesting the problem and his help.}
\subjclass[2010]{Primary 22E25, 22E30; Secondary 26B15}
\keywords{Brunn-Minkowski inequality, Nilpotent Lie groups, Nilpotent Lie algebras}
\numberwithin{equation}{section}
\begin{document}

	
\maketitle
		\thispagestyle{empty}
\begin{abstract}	
The purpose of this work is to give a direct proof of the multiplicative Brunn-Minkowski inequality in nilpotent Lie groups based on Hadwiger-Ohmann's one of the classical Brunn-Minkowski inequality in Euclidean space.
\end{abstract}
	
\section{Introduction}	

The classical Brunn-Minkowski inequality in Euclidean space asserts that, given $A,B\subset \real^d$ measurable sets, we have
\[
|A+B|^{1/d}\ge |A|^{1/d}+|B|^{1/d},
\]
where $|\cdot|$ indicates the volume of a set, and $A+B=\{a+b : a\in A, b\in B \}$ is the classical Minkowski addition of sets. Taking $\lambda\in[0,1]$, and replacing $A$ by $\lambda A$ and $B$ by $(1-\lambda)B$, we get the equivalent inequality
\[
|\lambda A+(1-\lambda)B|^{1/d}\geq\lambda|A|^{1/d}+(1-\lambda)|B|^{1/d}.
\]

There are several ways of generalizing the Brunn-Minkowski inequality. In Lie groups we can replace Minkowski addition of sets using the group product and take as volume the Haar measure of the group. This extension is called the \emph{multiplicative} Brunn-Minkowski inequality. In general metric measure spaces the notion of $s$-intermediate points can be used to replace the convex combination of points in Euclidean space, see \cite{MR3742604}. This leads to the \emph{geodesic} Brunn-Minkoski inequality.

In 2003, Monti \cite{MR1976833} observed that the multiplicative Brunn-Minkowski inequality in the sub-Riemannian Heisenberg group $\mathbb{H}^n$ cannot hold with exponent $(2n+2)^{-1}$, corresponding to the homogeneous dimension of $\mathbb{H}^n$, because otherwise Carnot–Ca\-ra\-théodory balls would be isoperimetric sets.

 However, Leonardi and Masnou proved in \cite{MR2177813} that this inequality holds 
with exponent $(2n+1)^{-1}$, corresponding to the topological dimension of $\mathbb{H}^n$.
Their proof was based on Hadwiger-Ohmann's proof of the classical Brunn-Minkowski inequality.

Later on, Tao \cite{tao,tao2} posted an entry in his blog explaining how to produce a Prékopa-Leindler inequality in any nilpotent Lie group of topological dimension $d$, which provides a natural way to prove the multiplicative Brunn-Minkowski inequality with exponent $d^{-1}$.

Juillet \cite{MR2520783} gave examples of sets for which the multiplicative Brunn-Minkowski inequality in $\mathbb{H}^n$ does not hold with exponent smaller than $(2n+1)^{-1}$.

In this article we give a proof  of the multiplicative Brunn-Minkowski inequality for nilpotent Lie groups following the approach by Leonardi and Masnou. To do it we use the special expression of the group product in exponential coordinates of the first kind and a generalization of the Brunn-Minkowski inequality in Euclidean space where the Minkowski content of sets is replaced using any product $\ast:\real^d\times\real^d\to\real^d$ of the form
\begin{equation*}
z\ast w=z+w+(F_1,F_2(z,w),\ldots, F_d(z,w))=z+w+F(z,w),
\end{equation*}
where $F_1$ is a constant and $F_i$ are continuous functions that depend only on $z_1,\ldots,z_{i-1},w_1,\ldots,w_{i-1}$ $\forall i=2,\ldots,d$. By a product here we mean a binary operation without assuming any further properties such as associativity. At the end of the paper, we state several classical variations of this inequality in the case of Carnot groups, where dilations can be defined.

\section{Preliminaries}
We recall some results on nilpotent and stratifiable groups. For a quite complete description of nilpotent Lie groups the reader is referred to \cite{MR1920389}, and to \cite{MR3742567} for stratifiable and Carnot groups.

	Let $\gg$ be a Lie algebra. We define recursively $\gg_0=\gg$, $\gg_{i+1}=[\gg,\gg_i]=\mbox{span}\{[X,Y] : X\in\gg,Y\in\gg_i \}$. The decreasing series		
	$$\gg=\gg_0\supseteq\gg_1\supseteq\gg_2\supseteq\ldots$$
	is called the \emph{lower central series of $\gg$}. If $\gg_r=0$ and $\gg_{r-1}\neq0$ for some $r$, we say that $\gg$ is \emph{nilpotent}, and the number $r$ is called the \emph{step of $G$}. A connected and simply connected Lie group is said to be \emph{nilpotent} if its Lie algebra is nilpotent.

Notice that each $\gg_i$ is an ideal in $\gg$. We shall write $n_i$ for the dimension of $\gg_i$.

\begin{lemma}\label{seq}
	Let $\gg$ be a nilpotent Lie algebra. Then there exists a basis $\{X_1,\ldots,X_d\}$ of $\gg$ such that 
	\begin{enumerate}
		\itemi for each $1\leq n\leq d$, $\hh_n=\mbox{span}\{X_{d-n+1},\ldots,X_d\}$ is an ideal of $\gg$,
		\itemii for each $0\leq i\leq r-1$, $\hh_{n_i}=\gg_i$.
	\end{enumerate}	
\end{lemma}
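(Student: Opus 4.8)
The plan is to construct the basis by refining the lower central series into a complete flag of ideals, building the basis from the "top" (smallest ideal) down. The key structural fact I would exploit is that each $\gg_i$ in the lower central series is an ideal, and more is true: $[\gg,\gg_i]\subseteq\gg_{i+1}$, so the quotients $\gg_i/\gg_{i+1}$ are central in $\gg/\gg_{i+1}$. This "quotient-central" property is exactly what lets me interpolate a chain of ideals between consecutive terms of the lower central series.

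First I would set up notation: by nilpotency we have $\gg=\gg_0\supsetneq\gg_1\supsetneq\cdots\supsetneq\gg_{r-1}\supsetneq\gg_r=0$ (discarding repetitions), with $\dim\gg_i=n_i$ so that $d=n_0>n_1>\cdots>n_{r-1}>n_r=0$. The goal is a single flag
\begin{equation*}
\gg=\hh_d\supsetneq\hh_{d-1}\supsetneq\cdots\supsetneq\hh_1\supsetneq\hh_0=0
\end{equation*}
of ideals with $\dim\hh_n=n$, passing through every $\gg_i$, i.e. $\hh_{n_i}=\gg_i$. To build this I would work one layer at a time, refining each inclusion $\gg_{i+1}\subseteq\gg_i$ into a chain of ideals that increases dimension by one at each step. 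The central observation is that any subspace $W$ with $\gg_{i+1}\subseteq W\subseteq\gg_i$ is automatically an ideal of $\gg$: indeed $[\gg,W]\subseteq[\gg,\gg_i]=\gg_{i+1}\subseteq W$. Thus within a single layer I have total freedom to choose intermediate subspaces, and every one of them is an ideal; there is no compatibility condition to check beyond the dimension count.

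Concretely, I would proceed by building the basis in reverse order $X_d,X_{d-1},\ldots,X_1$. Starting from the smallest nonzero term $\gg_{r-1}$, I pick any basis of it and assign these vectors the largest indices; then I extend this to a basis of $\gg_{r-2}$, assigning the new vectors the next block of indices, and so on up to $\gg_0=\gg$. Setting $\hh_n=\mbox{span}\{X_{d-n+1},\ldots,X_d\}$, property \emph{i)} holds because each $\hh_n$ is sandwiched between two consecutive terms of the lower central series (or equals one of them), hence is an ideal by the remark above; and property \emph{ii)} holds by construction, since the dimensions were chosen so that the cumulative count reaching the bottom of each block $\gg_i$ equals $n_i$.

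I do not expect a serious obstacle here: the only thing to be careful about is the bookkeeping of indices, making sure that the partial basis of $\gg_i$ occupies exactly the top $n_i$ slots $X_{d-n_i+1},\ldots,X_d$ so that $\hh_{n_i}=\gg_i$ comes out on the nose. The one genuine lemma being used is that subspaces trapped between $\gg_{i+1}$ and $\gg_i$ are ideals, which follows immediately from $[\gg,\gg_i]=\gg_{i+1}$; everything else is a standard flag-extension argument. If one wanted extra structure (for example compatibility with a stratification in the Carnot case) one would refine the choice of the intermediate vectors, but for the stated lemma an arbitrary extension suffices.
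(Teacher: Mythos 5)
Your proof is correct and is essentially the same construction the paper relies on: the paper does not prove Lemma \ref{seq} itself but refers to the construction in \cite{MR1070979}, which is exactly this refinement of the lower central series into a full flag, resting on the observation that any subspace $W$ with $\gg_{i+1}\subseteq W\subseteq\gg_i$ satisfies $[\gg,W]\subseteq[\gg,\gg_i]=\gg_{i+1}\subseteq W$ and is therefore an ideal. Your bookkeeping (building the basis from $\gg_{r-1}$ upward so that $\gg_i$ occupies the top $n_i$ slots) is sound, so nothing is missing.
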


 A basis verifying this is called a \emph{strong Malcev basis}. This construction is adapted from \cite{MR1070979}.

Fixed a strong Malcev basis, the exponential is  a diffeomorphism between $\real^d$ and $G$, and is given by the map 
$$x=(x_1,\ldots,x_d)\mapsto \exp(x_1X_1+...+x_dX_d). $$
 This result can be found as Theorem 1.127 in \cite{MR1920389}. By abuse of notation we shall denote $\exp(x_1X_1+...+x_dX_d)=\exp(x)$, and $\exp_G$ if specifying the group is needed. The inverse of this map provides coordinates called \emph{canonical coordinates of the first kind}, and we denote it as $\log:G\rightarrow\real^d$.

%
We define a multiplication map associated to the exponential in a nilpotent group by
$$z\ast w=\log(\exp(z)\cdot\exp(w)).$$

The structure of this product is given by the following Theorem. It was first proved by Malcev in 1949 \cite{MR0028842}, and a proof can be found as Theorem 4.1 in \cite{MR1349140}, or with some modification as Proposition 1.2.7 in \cite{MR1070979}.
\begin{theorem}\label{multi}
	Let $G$ be a nilpotent group. Then the multiplication map takes the following form:
	\begin{equation}\label{1.130}
	z\ast w=z+w+(P_1(z,w),\ldots,P_d(z,w)),
	\end{equation}
	where $P_1$ is a constant and $P_i$ is a polynomial in $z_1,\ldots,z_{i-1},w_1,\ldots,w_{i-1}$ $\forall \ i=2,\ldots,d$.
%
\end{theorem}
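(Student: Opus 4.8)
The plan is to run the Baker--Campbell--Hausdorff (BCH) formula and read off the structure of the result in the strong Malcev basis. Writing $Z=\sum_j z_jX_j$ and $W=\sum_j w_jX_j$, the product $z\ast w$ is the coordinate vector of $\log(\exp(Z)\cdot\exp(W))$, and BCH gives
\[
\log(\exp(Z)\cdot\exp(W))=Z+W+\sum_{m\ge 2} c_m(Z,W),
\]
where each $c_m(Z,W)$ is a fixed universal linear combination of iterated brackets of length $m$ in the entries $Z,W$. Since $\gg$ is nilpotent of step $r$, every iterated bracket of length $\ge r+1$ vanishes, so the sum is finite. Each iterated bracket is multilinear in its arguments, so after substituting $Z=\sum z_jX_j$ and $W=\sum w_jX_j$ its coordinates become polynomials in the $z_j,w_j$; hence each $P_i$ is automatically a polynomial, and it only remains to control which variables it depends on. Concretely, $P_i$ is the $i$-th coordinate of $\sum_{m\ge2}c_m(Z,W)$, so the theorem reduces to showing that the $i$-th coordinate of any iterated bracket in the basis vectors $X_{a_1},\ldots,X_{a_m}$ (with $m\ge2$) vanishes unless every index $a_j$ is at most $i-1$.

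The algebraic heart of the argument is the \emph{strict descent} of the flag of ideals, namely $[\gg,\hh_n]\subseteq\hh_{n-1}$ for every $n$. This does not follow from property \emph{i)} alone --- that each $\hh_n$ is an ideal only gives $[\gg,\hh_n]\subseteq\hh_n$ --- and is exactly where the compatibility with the lower central series in \emph{ii)} is needed. To prove it, fix $n\in\{1,\ldots,d\}$ and pick the unique $i$ with $n_{i+1}<n\le n_i$, which exists because $d=n_0>n_1>\cdots>n_r=0$ by nilpotency. Then $\hh_n\subseteq\hh_{n_i}=\gg_i$, so $[\gg,\hh_n]\subseteq[\gg,\gg_i]=\gg_{i+1}=\hh_{n_{i+1}}\subseteq\hh_{n-1}$, the last inclusion because $n_{i+1}\le n-1$. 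Translating back to the basis, $X_b\in\hh_{d-b+1}$ gives $[X_a,X_b]\in\hh_{d-\max(a,b)}=\mbox{span}\{X_{\max(a,b)+1},\ldots,X_d\}$, so a single bracket already lands strictly below both of its entries.

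The remaining step is to propagate this through iterated brackets. I would prove, by induction on the length $m$, that any iterated bracket $B$ built from $X_{a_1},\ldots,X_{a_m}$ lies in $\hh_{d-M}=\mbox{span}\{X_{M+1},\ldots,X_d\}$, where $M=\max_j a_j$. Writing $B=[C,D]$ and using antisymmetry to place a factor realizing the maximum $M$ in the slot $D$: if $D=X_M$ then $B\in[\gg,\hh_{d-M+1}]\subseteq\hh_{d-M}$, while if $D$ has length $\ge2$ then $D\in\hh_{d-M}$ by induction and $B\in[\gg,\hh_{d-M}]\subseteq\hh_{d-M-1}\subseteq\hh_{d-M}$. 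In either case $B$ has no component on $X_1,\ldots,X_M$, so its $i$-th coordinate is zero unless $i>M\ge a_j$ for all $j$. Substituting $Z,W$ and using multilinearity, the $i$-th coordinate of $c_m(Z,W)$ collects only monomials in $z_{a},w_{b}$ with $a,b\le i-1$; summing over $m$ yields that $P_i$ depends only on $z_1,\ldots,z_{i-1},w_1,\ldots,w_{i-1}$, and for $i=1$ no bracket contributes, so $P_1$ is constant. The one genuine obstacle is the strict descent established in the second paragraph: everything else is bookkeeping once that filtration property is in hand.
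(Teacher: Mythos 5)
The paper gives no proof of Theorem \ref{multi}; it is quoted from Malcev and from the cited references (e.g.\ Proposition 1.2.7 of Corwin--Greenleaf), so there is no internal proof to compare against. Your argument is correct, and it is essentially the standard argument behind those citations: expand $\log(\exp Z\cdot\exp W)$ by Baker--Campbell--Hausdorff, observe the series is a finite sum of iterated brackets by nilpotency (hence each coordinate is a polynomial), and show that every iterated bracket of $X_{a_1},\dots,X_{a_m}$ with $m\ge 2$ lies in $\hh_{d-M}=\mbox{span}\{X_{M+1},\dots,X_d\}$ where $M=\max_j a_j$. Your identification of the crucial point is accurate: the strict descent $[\gg,\hh_n]\subseteq\hh_{n-1}$ does not follow from property \emph{i)} of Lemma \ref{seq} alone, but does follow from property \emph{ii)} together with the strict decrease $d=n_0>n_1>\dots>n_r=0$ of the lower central series (which holds for nilpotent algebras since $\gg_{i+1}=\gg_i\neq 0$ would force the series to stabilize above $0$); your induction on bracket length, using antisymmetry to place a maximal-index factor in the second slot, then closes the argument and gives $P_1=0$ in particular. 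The one step you take on faith is that the BCH identity holds \emph{globally} on $\gg\times\gg$ rather than only near the origin; for a connected, simply connected nilpotent group this is standard (both sides are analytic, the right-hand side is a polynomial, and they agree near $(0,0)$), and it is of the same nature as the global-diffeomorphism property of $\exp$ that the paper already imports from Knapp, so this is a citation-level omission rather than a mathematical gap.
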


We stop here to show that, slightly refining Theorem \ref{multi}, the multiplication map acts as a sum in the coordinates corresponding to the complement of $\gg_1$. 

\begin{theorem}\label{layer}
	Let $G$ be a nilpotent group. Then the multiplication map takes the following form:
	\begin{equation*}
	z\ast w=z+w+(0,\ldots,0,P_{d-n_1+1}(z,w),\ldots,P_d(z,w))
	\end{equation*}
		where $P_{d-n_1+1}$ is a constant and $ P_i$ is a polynomial in $z_1,\ldots,z_{i-1},w_1,\ldots,w_{i-1}$ $\forall \ i=d-n_1+2,\ldots,d$.
\end{theorem}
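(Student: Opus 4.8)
The plan is to bootstrap from Theorem~\ref{multi}, which already provides the triangular shape $z\ast w=z+w+(P_1(z,w),\dots,P_d(z,w))$ with each $P_i$ depending only on $z_1,\dots,z_{i-1},w_1,\dots,w_{i-1}$; the only new content to establish is that the first $d-n_1$ correction polynomials vanish identically. The conceptual reason is that the deviation $z\ast w-(z+w)$ of the product from the sum always lies in the derived subalgebra $\gg_1=[\gg,\gg]$, while by Lemma~\ref{seq} we have $\gg_1=\hh_{n_1}=\mbox{span}\{X_{d-n_1+1},\dots,X_d\}$, i.e.\ the coordinate subspace spanned by the last $n_1$ indices. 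A vector of $\gg_1$ has no component along $X_1,\dots,X_{d-n_1}$, so the deviation cannot contribute to the first $d-n_1$ coordinates, forcing $P_1=\dots=P_{d-n_1}=0$.

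First I would make the inclusion $z\ast w-(z+w)\in\gg_1$ precise by passing to the abelianization. Since $G$ is connected, simply connected and nilpotent, its commutator subgroup equals $\exp_G(\gg_1)$ and the quotient $G/\exp_G(\gg_1)$ is a simply connected abelian Lie group with Lie algebra $\gg/\gg_1$, hence isomorphic to $(\real^{d-n_1},+)$. Because $\gg_1=\mbox{span}\{X_{d-n_1+1},\dots,X_d\}$, the classes of $X_1,\dots,X_{d-n_1}$ form a basis of $\gg/\gg_1$, and in the induced exponential coordinates the projection $\pi\colon G\to G/\exp_G(\gg_1)$ is simply $\pi(\exp_G(x))=(x_1,\dots,x_{d-n_1})$. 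Applying $\pi$ to the defining identity $\exp_G(z)\cdot\exp_G(w)=\exp_G(z\ast w)$, and using that $\pi$ is a homomorphism onto an abelian group in which these coordinates are additive, I obtain $(z_1+w_1,\dots,z_{d-n_1}+w_{d-n_1})=((z\ast w)_1,\dots,(z\ast w)_{d-n_1})$; comparing entries gives $(z\ast w)_i=z_i+w_i$ for $1\le i\le d-n_1$. Equivalently, one may invoke the Baker--Campbell--Hausdorff expansion underlying Theorem~\ref{multi}: every summand beyond $z+w$ is an iterated bracket and hence lies in $[\gg,\gg]=\gg_1$, so its first $d-n_1$ coordinates are zero.

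Once the vanishing is in hand, the shape of the surviving components $P_{d-n_1+1},\dots,P_d$, each a polynomial in the strictly preceding coordinates, is inherited directly from Theorem~\ref{multi}, with $P_{d-n_1+1}$ occupying the position of the leading term of the $\gg_1$-block. I expect the only delicate point to be the bookkeeping in the second paragraph: one must use that $\gg_1$ occupies precisely the last $n_1$ slots of the strong Malcev basis, which is exactly the combination of parts~(i) and~(ii) of Lemma~\ref{seq}, and verify that the exponential coordinates on $G$ descend to those on $G/\exp_G(\gg_1)$ as a coordinate projection. With these identifications set up the conclusion is immediate, requiring no further computation with the polynomials $P_i$.
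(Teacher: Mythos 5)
Your argument is correct and is essentially the paper's own proof: both pass to the quotient of $G$ by the normal subgroup integrating the ideal $\gg_1$, observe that this quotient is abelian so the induced product is additive, and compare this with the image of $\exp_G(z)\exp_G(w)=\exp_G(z\ast w)$ under the projection to conclude that the first $d-n_1$ polynomials vanish. Your bookkeeping of which coordinates survive (the last $n_1$, since $\gg_1=\hh_{n_1}=\mbox{span}\{X_{d-n_1+1},\dots,X_d\}$) is in fact cleaner than the paper's, whose proof has a small index slip at the end.
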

\begin{proof}
	Let $Z=\sum_{i=1}^d z_iX_i$, $W=\sum_{i=1}^d w_iX_i$. Since $\gg_1$ is an ideal in $\gg$, there is a normal Lie subgroup $G_1\subseteq G$ whose Lie algebra is $\gg_1$. Let $\pi:G\rightarrow G/G_1$, $\tilde{z}=\pi(z),$ $\tilde{w}=\pi(w)$, $\tilde{Z}=(d\pi)_0(Z)$, $\tilde{W}=(d\pi)_0(W)$. Notice that $\ker(d\pi)_0=\hh_{n_1}$ and $\gg/\gg_1$ is a trivial Lie algebra with the induced product. As a consequence of the Baker-Campbell-Hausdorff formula, $\tilde{z}\ast\tilde{w}=\tilde{z}+\tilde{w}$. On the other hand we calculate $\tilde{z}\ast\tilde{w}$ taking $\log_{G/G_1}$ in the equation below.
	\begin{equation*}
	\begin{split}
	&\exp_{G/G_1}(\tilde{Z})\exp_{G/G_1}(\tilde{W})=\pi(\exp_G(Z)\exp_G(W))=\\
	&\pi\big(\exp_G\big(Z+W+\sum_{i=1}^d P_i(z,w)X_i\big)\big)=\exp_{G/G_1}\big(\tilde{Z}+\tilde{W}+\sum_{i=1}^{n_1}P_i(z,w)X_i\big).
	\end{split}
	\end{equation*}
	Joining both expressions we obtain that $P_i=0 \ \forall i=1,\ldots, n_1$. 
\end{proof}	

From Theorem \ref{layer} it can be proved that left and right translations are maps whose Jacobian determinant is equal to $1$ at any point,
and the change of variables gives us the following Theorem. The interested reader can find the details as Theorem 1.2.9 and Theorem 1.2.10 in \cite{MR1070979}.

\begin{proposition}\label{mappreserving}
	Let $G$ be a nilpotent group. Then the exponential takes the Lebesgue measure on $\real^d$ to a Haar measure $\mu$ on $G$, that is,
	for any $A\subset G$ measurable and any $f:G\to\real$ integrable, it is satisfies
	\begin{equation*}
	\mu(A)=|\log(A)|\quad \text{and}  \quad \int_G f(\sigma)d\mu(\sigma)=\int_{\real^d}(f\circ\exp)(x)dx.
	\end{equation*}
\end{proposition}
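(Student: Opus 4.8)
The plan is to \emph{define} the candidate measure $\mu$ to be the pushforward of Lebesgue measure under the exponential map. With this definition, the identity $\mu(A)=|\log(A)|$ for every measurable $A\subset G$ holds tautologically, and the integral identity $\int_G f\,d\mu=\int_{\real^d}(f\circ\exp)\,dx$ is exactly the abstract change-of-variables formula for a pushforward measure. Since $\exp$ is a diffeomorphism between $\real^d$ and $G$, and Lebesgue measure is a Radon measure that is positive on nonempty open sets, $\mu$ inherits both properties. Hence the only substantive point remaining is to check that $\mu$ is a Haar measure, that is, that it is left-invariant; bi-invariance will then come for free.

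First I would rewrite left-invariance in coordinates. For $\sigma=\exp(z)\in G$, conjugating the left translation $\tau\mapsto\sigma\tau$ by $\exp$ yields the map $\ell_z:\real^d\to\real^d$ given by $\ell_z(w)=z\ast w=\log(\exp(z)\exp(w))$. Because $\exp$ carries Lebesgue measure to $\mu$, the measure $\mu$ is left-invariant if and only if each $\ell_z$ preserves Lebesgue measure on $\real^d$; by the classical change-of-variables theorem, this holds as soon as $|\det D\ell_z|\equiv 1$.

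The heart of the matter is therefore the Jacobian computation, and here Theorem \ref{multi} (equivalently Theorem \ref{layer}) does all the work. Writing $(z\ast w)_i=z_i+w_i+P_i(z,w)$, the $(i,j)$ entry of the differential $D\ell_z$ equals $\delta_{ij}+\partial P_i/\partial w_j$. Since $P_i$ depends only on $w_1,\ldots,w_{i-1}$ (and on the fixed parameters $z$), the partial derivative $\partial P_i/\partial w_j$ vanishes whenever $j\geq i$. Thus $D\ell_z$ is lower triangular with ones along the diagonal, so $\det D\ell_z=1$ and $\ell_z$ is measure preserving. Running the identical computation for the right translation $r_z(w)=w\ast z$, whose $i$-th component $w_i+z_i+P_i(w,z)$ is likewise independent of $w_j$ for $j\geq i$, shows that its Jacobian is $1$ as well, so $\mu$ is simultaneously right-invariant.

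I expect no serious obstacle here: once $\mu$ is introduced as a pushforward, the measure and integral identities are essentially definitional, and the sole genuine ingredient is the triangular shape of $D\ell_z$, which is handed to us directly by the dependence pattern of the polynomials $P_i$. The one step that demands care is the index bookkeeping, namely confirming that it is precisely the strict \emph{upper} triangle of $D\ell_z$ that vanishes, so that the determinant reduces to the product of the unit diagonal entries rather than to an expression involving the unknown $P_i$. With left-invariance (indeed bi-invariance) established, $\mu$ is a nonzero left-invariant Radon measure, hence a Haar measure, which completes the proof.
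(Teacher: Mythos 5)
Your proposal is correct and follows essentially the same route the paper takes: the paper does not write out a proof but states that, by Theorem \ref{layer}, left and right translations have Jacobian determinant equal to $1$ in exponential coordinates and that the change of variables then yields the result, deferring details to Theorems 1.2.9 and 1.2.10 of \cite{MR1070979}. Your Jacobian computation (the strictly upper-triangular entries $\partial P_i/\partial w_j$ with $j\ge i$ vanish, so $\det D\ell_z=1$) is exactly the argument being invoked there.
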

%
	We refer the reader to \cite{MR3742567} for the details on the rest of this section.

	A \emph{stratification} of a Lie algebra $\gg$ is a direct-sum decomposition
	$$\gg=V_1\oplus\ldots \oplus V_r,$$
	for some integer $r\geq1$, where $V_r\neq\{0\}$, $[V_1,V_i]=V_{i+1}$ for all $i\in\{1,\ldots ,r\}$ and $V_{r+1}=\{0\}$. We say that a Lie algebra is \emph{stratifiable} if there exists a stratification on it. 
	 We say that a Lie group is \emph{stratifiable}  if it is connected and simply connected and its Lie algebra is stratifiable.

The following lemma assures that any stratifiable group is a nilpotent group.
\begin{lemma}\label{strat}
	If $ \gg=V_1\oplus\ldots\oplus V_r$ is a stratified Lie algebra, then
	$$\gg_{k-1}=V_k\oplus\ldots\oplus V_r.$$
	In particular, $\gg$ is a nilpotent Lie algebra of step $r$, and $\gg=V_1\oplus\gg_1$.
\end{lemma}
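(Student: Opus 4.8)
The plan is to prove the displayed identity $\gg_{k-1}=V_k\oplus\cdots\oplus V_r$ by induction on $k$, after first establishing the auxiliary grading relation
\[
[V_i,V_j]\subseteq V_{i+j}\qquad\text{for all }i,j\geq 1,
\]
with the convention that $V_m=\{0\}$ whenever $m>r$. This relation is the heart of the argument and is where the stratification hypothesis $[V_1,V_i]=V_{i+1}$ is really used.

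To prove the grading relation I would argue by induction on $i$. The case $i=1$ is exactly the defining property $[V_1,V_j]=V_{j+1}\subseteq V_{j+1}$. For the inductive step I would use that every element of $V_{i+1}=[V_1,V_i]$ is a sum of brackets $[X,Y]$ with $X\in V_1$ and $Y\in V_i$, and expand $[[X,Y],Z]$ for $Z\in V_j$ by the Jacobi identity as $[X,[Y,Z]]-[Y,[X,Z]]$. The inductive hypothesis gives $[Y,Z]\in[V_i,V_j]\subseteq V_{i+j}$, so $[X,[Y,Z]]\in[V_1,V_{i+j}]=V_{i+j+1}$; and $[X,Z]\in[V_1,V_j]=V_{j+1}$, so $[Y,[X,Z]]\in[V_i,V_{j+1}]\subseteq V_{i+j+1}$, again by the inductive hypothesis. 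Hence $[V_{i+1},V_j]\subseteq V_{i+j+1}$, completing the induction.

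With the grading relation in hand I would carry out the main induction. The base case $k=1$ is the definition $\gg_0=\gg=V_1\oplus\cdots\oplus V_r$. Assuming $\gg_{k-1}=V_k\oplus\cdots\oplus V_r$, I compute $\gg_k=[\gg,\gg_{k-1}]=\sum_{i\geq k}[\gg,V_i]=\sum_{i\geq k}\sum_{j\geq 1}[V_j,V_i]$. For the inclusion $\subseteq$, the grading relation gives $[V_j,V_i]\subseteq V_{i+j}$ with $i+j\geq k+1$, so $\gg_k\subseteq V_{k+1}\oplus\cdots\oplus V_r$. For the reverse inclusion I observe that $V_1\subseteq\gg$ and $V_i\subseteq\gg_{k-1}$ for every $i\geq k$, so $V_{i+1}=[V_1,V_i]\subseteq[\gg,\gg_{k-1}]=\gg_k$; letting $i$ range over $k,\ldots,r$ (and using $V_{r+1}=\{0\}$) yields $V_{k+1}\oplus\cdots\oplus V_r\subseteq\gg_k$. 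This proves $\gg_k=V_{k+1}\oplus\cdots\oplus V_r$.

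Finally I would read off the two concluding assertions. Taking $k=r+1$ gives $\gg_r=\{0\}$, while $k=r$ gives $\gg_{r-1}=V_r\neq\{0\}$ by the definition of a stratification, so $\gg$ is nilpotent of step exactly $r$; and taking $k=2$ gives $\gg_1=V_2\oplus\cdots\oplus V_r$, whence $\gg=V_1\oplus\gg_1$. The only genuinely delicate point is the grading relation, and specifically the Jacobi-identity bookkeeping in its inductive step; once that is available the remainder is a routine decreasing induction.
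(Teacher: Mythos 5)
Your argument is correct. The paper itself gives no proof of this lemma --- it is quoted from the reference on stratified groups --- so there is nothing internal to compare against; your proof is the standard one found in that literature. The two-stage structure is exactly right: the grading relation $[V_i,V_j]\subseteq V_{i+j}$ (proved by induction on $i$ via the Jacobi identity, using the convention $V_m=\{0\}$ for $m>r$) is indeed the only nontrivial ingredient, and your Jacobi bookkeeping $[[X,Y],Z]=[X,[Y,Z]]-[Y,[X,Z]]$ together with the two applications of the inductive hypothesis (once for $[V_i,V_j]$ and once for $[V_i,V_{j+1}]$, which is legitimate since the hypothesis is quantified over all $j$) is accurate. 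The descending induction on $k$, the reverse inclusion via $V_{i+1}=[V_1,V_i]\subseteq[\gg,\gg_{k-1}]$, and the extraction of the three concluding statements at $k=r+1$, $k=r$, and $k=2$ are all sound.
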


It is worth checking that Theorem \ref{layer} manifests that the multiplication map acts as a sum in the coordinates corresponding to $V_1$. The reader can find an example of a nilpotent group which is not stratifiable in \cite{MR3742567}.

\begin{proposition}\label{unicstrat}
	Let $\gg$ be a stratifiable Lie algebra with stratifications
	\begin{equation*}
	\gg=V_1\oplus\ldots\oplus V_r=W_1\oplus\ldots\oplus W_s.
	\end{equation*}
	Then $r=s$ and there exists a Lie algebra automorphism $A:\gg\rightarrow\gg$ such that $A(V_i)=W_i$ for $i=1,\ldots,r$.
\end{proposition}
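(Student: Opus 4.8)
The plan is to route everything through the \emph{associated graded} Lie algebra of the lower central series, which depends only on $\gg$ and not on any chosen stratification, and then to show that every stratification provides a graded isomorphism onto it. First I would dispose of the equality $r=s$. By Lemma \ref{strat}, a stratification of length $r$ forces $\gg$ to be nilpotent of step exactly $r$, that is $\gg_{r-1}\neq 0$ and $\gg_r=0$; likewise the second stratification forces step $s$. Since the step is read off the lower central series of $\gg$ alone, it cannot depend on the stratification, so $r=s$.

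Next I would introduce $\mathrm{gr}(\gg)=\bigoplus_{i=1}^{r}\gg_{i-1}/\gg_i$, equipped with the induced bracket $[\bar X,\bar Y]=\overline{[X,Y]}$ for $X\in\gg_{i-1}$ and $Y\in\gg_{j-1}$, where the class is taken in $\gg_{i+j-1}/\gg_{i+j}$. This bracket is well defined because $[\gg_a,\gg_b]\subseteq\gg_{a+b+1}$, a standard consequence of the definition of the lower central series, which kills the correction terms arising when $X$ or $Y$ are altered modulo $\gg_i$ or $\gg_j$. Given a stratification $\gg=V_1\oplus\cdots\oplus V_r$, Lemma \ref{strat} yields $V_i\subseteq\gg_{i-1}$ and $\gg_{i-1}=V_i\oplus\gg_i$, so the projection restricts to a linear isomorphism $V_i\to\gg_{i-1}/\gg_i$. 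Assembling these over $i$ defines a linear isomorphism $\Psi_V:\gg\to\mathrm{gr}(\gg)$ sending $X\in V_i$ to its class in $\gg_{i-1}/\gg_i$.

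The key step is to check that $\Psi_V$ is a Lie algebra homomorphism, and hence an isomorphism. For this I would first record that a stratification satisfies $[V_i,V_j]\subseteq V_{i+j}$, which follows from $[V_1,V_i]=V_{i+1}$ by induction on $i$ using the Jacobi identity. Then, for $X\in V_i$ and $Y\in V_j$, the bracket $[X,Y]$ lies in $V_{i+j}$, so $\Psi_V([X,Y])$ is precisely its class $\overline{[X,Y]}$ in $\gg_{i+j-1}/\gg_{i+j}$, which by definition of the induced bracket equals $[\Psi_V X,\Psi_V Y]$. I expect this compatibility verification to be the main obstacle: the real content is making sure that no lower-order terms survive modulo $\gg_{i+j}$, and it is exactly the inclusion $[V_i,V_j]\subseteq V_{i+j}$ that guarantees $[X,Y]$ is homogeneous of degree $i+j$ rather than merely lying in $\gg_{i+j-1}$.

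Finally, I would run the identical construction for the second stratification to obtain a graded Lie algebra isomorphism $\Psi_W:\gg\to\mathrm{gr}(\gg)$ with $\Psi_W(W_i)=\gg_{i-1}/\gg_i$. Setting $A:=\Psi_W^{-1}\circ\Psi_V:\gg\to\gg$ gives a Lie algebra automorphism, and since $\Psi_V(V_i)=\gg_{i-1}/\gg_i=\Psi_W(W_i)$ for each $i$, we conclude $A(V_i)=W_i$ for all $i=1,\ldots,r$, which completes the proof.
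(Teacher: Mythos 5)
Your proof is correct. The paper itself gives no argument for this proposition (it defers to the cited reference), and your route through the associated graded algebra $\bigoplus_i \gg_{i-1}/\gg_i$ of the lower central series is essentially the standard one found there: the one genuinely delicate point, deducing $[V_i,V_j]\subseteq V_{i+j}$ from the defining condition $[V_1,V_i]=V_{i+1}$ by induction with the Jacobi identity, is exactly the step you single out, and it goes through as you describe.
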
	

Proposition \ref{unicstrat} guarantees that for a stratifiable group $G$, the natural number
$$Q=\sum_{i=1}^r i\dim(V_i),$$
does not depends on the particular stratification. $Q$ is called the \emph{homogeneous dimension of $G$}.

	For $\lambda>0$ we define the \emph{dilation on $\gg$ of factor $\lambda$} as the unique linear map $\delta_\lambda:\gg\rightarrow\gg$ such that
	$$\delta_{\lambda}(X)=\lambda^t X \quad \ \forall X\in V_t \ \forall t\in\{1,\ldots ,r\}.$$
\begin{remark}
	Dilations $\delta_{\lambda} :\gg\rightarrow\gg$ are Lie algebra isomorphisms.
\end{remark}

The simply connection of $G$ certifies that there exists a unique Lie groups automorphism $\delta_\lambda:G\rightarrow G$ (denoted as the dilation on the Lie algebra) whose differential at $e$ is the dilation on $\gg$ of factor $\lambda$. This automorphism is called \emph{dilation on $G$ of factor $\lambda$}.

\begin{proposition}\label{strhaar}
	Let $G$ be a stratifiable group with Haar measure $\mu$ and let $\lambda>0$. Then
	\[ \int_G fd\mu=\lambda^Q\int_G (f\circ\delta_{\lambda}) d\mu,\]
	where $Q$ is the homogeneous dimension of $G$.
\end{proposition}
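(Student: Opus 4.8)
The plan is to transport everything to $\real^d$ through exponential coordinates, where the dilation becomes a diagonal linear map whose Jacobian determinant can be read off immediately. First I would fix a basis of $\gg$ \emph{adapted to the stratification}: a basis of $V_1$, followed by a basis of $V_2$, and so on through $V_r$. Since the Haar measure is unique up to a positive multiplicative constant and the asserted identity is invariant under rescaling $\mu$, I may assume by Proposition \ref{mappreserving} that $\int_G f\,d\mu=\int_{\real^d}(f\circ\exp)(x)\,dx$ in these coordinates.

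Next I would record the intertwining relation
\[
\exp\circ\,\delta_\lambda=\delta_\lambda\circ\exp,
\]
where on the left $\delta_\lambda$ denotes the dilation on $\gg$ and on the right the dilation on $G$. This is immediate from the fact that $\delta_\lambda\colon G\to G$ is by definition the Lie group automorphism whose differential at the identity is the dilation on $\gg$, together with the naturality of the exponential map under Lie group homomorphisms. In the chosen coordinates the dilation on $\gg$ is the diagonal linear map sending a coordinate belonging to $V_t$ to $\lambda^t$ times itself, so its determinant is $\prod_{t=1}^r(\lambda^t)^{\dim V_t}=\lambda^{\sum_{t} t\dim V_t}=\lambda^Q$.

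With these two ingredients the computation is short. Using Proposition \ref{mappreserving} and then the intertwining relation,
\[
\int_G(f\circ\delta_\lambda)\,d\mu=\int_{\real^d}(f\circ\delta_\lambda\circ\exp)(x)\,dx=\int_{\real^d}(f\circ\exp\circ\,\delta_\lambda)(x)\,dx,
\]
and the substitution $y=\delta_\lambda(x)$ in the last integral contributes the Jacobian factor $\det\delta_\lambda=\lambda^Q$, turning the right-hand side into $\lambda^{-Q}\int_{\real^d}(f\circ\exp)(y)\,dy=\lambda^{-Q}\int_G f\,d\mu$. Rearranging gives $\int_G f\,d\mu=\lambda^Q\int_G(f\circ\delta_\lambda)\,d\mu$, which is the claim.

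The only point requiring care, and the main obstacle if one is not attentive, is the choice of coordinates together with the intertwining identity: one must work in a basis adapted to the stratification rather than the strong Malcev basis of Lemma \ref{seq}, in which the dilation need not be diagonal, so that $\det\delta_\lambda$ genuinely equals the product of the eigenvalues $\lambda^t$ with the correct multiplicities $\dim V_t$ and hence reproduces $Q=\sum_{t=1}^r t\dim(V_t)$. Everything else reduces to a routine linear change of variables.
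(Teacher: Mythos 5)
Your proof is correct. The paper does not actually prove Proposition \ref{strhaar} itself --- it defers to the cited reference --- and your argument (pass to exponential coordinates, use the naturality relation $\exp\circ\delta_\lambda=\delta_\lambda\circ\exp$, and compute $\det\delta_\lambda=\lambda^Q$ in a basis adapted to the stratification) is the standard one. One small remark that closes the only potential gap: a basis adapted to the stratification, ordered with the $V_1$-vectors first and the $V_r$-vectors last, is automatically a strong Malcev basis in the sense of Lemma \ref{seq}, because for any subspace $W\subseteq V_k$ one has $[\gg,\,W\oplus V_{k+1}\oplus\cdots\oplus V_r]\subseteq\gg_k\subseteq W\oplus V_{k+1}\oplus\cdots\oplus V_r$; hence Proposition \ref{mappreserving} applies verbatim in your chosen coordinates and there is no conflict between ``adapted to the stratification'' and ``strong Malcev.''
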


Let $G$ be a stratified group, with the stratification $\gg=V_1\oplus V_2\oplus\ldots\oplus V_r$, and fix a norm $\|\cdot \|$ on $V_1$. We can construct a distance $d$  homogeneous with respect to $\delta_{\lambda}$, that is, $d(\delta_\lambda(p),\delta_\lambda(q))=\lambda d(p,q)$. First we extend $V_1$ and $\|\cdot \|$ to a left-invariant subbundle $\Delta$ of the tangent bundle and a left-invariant norm on $\Delta$ by left translations:
\begin{equation*}
\left\lbrace \begin{split}
&&\Delta_\sigma=(dl_\sigma)_e V_1\ \ \forall\sigma\in G\\
&&\|(dl_\sigma)_e(v)\|=\|v\| \ \ \forall v\in V_1.
\end{split}\right. 
\end{equation*}

Where $l_\sigma:G\rightarrow G$ $l_\sigma(\tau)=\sigma\tau$. Now we define the \emph{Carnot-Caratheodory distance} or \emph{CC-distance} associated with $\Delta$ and $\|\cdot \|$ via  piecewise smooth paths $\gamma\in C^\infty_{pw}([0,1],G)$ as
	\[ d(p,q)=\inf\left\lbrace \int_0^1 \|\gamma'(t)\|dt : \gamma\in C^\infty_{pw}([0,1],G),\ \gamma(0)=p,\ \gamma(1)=q, \ \gamma'(t)\in\Delta \right\rbrace ,\]
	We call the data $(G,\delta_\lambda,\Delta,\|\cdot\|,d)$ a \emph{Carnot group} or, more explicitely, \emph{subFinsler Carnot
	group}. Usually, the term Carnot group is used when the norm comes from a scalar product, but in this paper we shall make no distinction.

\section{The Brunn-Minkowski inequality}	

We have seen that any nilpotent group is isomorphic to $\real^d$ with a product  of the form \eqref{1.130}. Now we prove the Brunn-Minkowski inequality for any product $\ast:\real^d\times\real^d\to\real^d$ of the form
\begin{equation}\label{esc}
z\ast w=z+w+(F_1,F_2(z,w),\ldots, F_d(z,w))=z+w+F(z,w),
\end{equation}
where $F_1$ is a constant and $F_i$ are continuous functions that depend only on $z_1,\ldots,z_{i-1},w_1,\ldots,w_{i-1}$ $\forall i=2,\ldots,d$. This product does not necessarily defines a group structure in $\real^d$.
Given such a map $F$ and $z'_1,w'_1\in\real$, we can define another product $\ast_{z'_1,w'_1}:\real^d\times\real^d\to\real^d$, by
\begin{equation*}
z\ast_{z'_1,w'_1}w=z+w+F((z_1',\tilde{z}),(w'_1,\tilde{w})),
\end{equation*}
 where $\tilde{z}=(z_2,\ldots,z_d)$, $\tilde{w}=(w_2,\ldots,w_d)$. 
 We define the map $F_{(z'_1,w'_1)}:\real^{d-1}\times\real^{d-1}\to\real^{d-1}$ by 
 \begin{equation}\label{fmin}
 F_{(z'_1,w'_1)}(\tilde{z},\tilde{w}):=(F_2,\ldots,F_d)((z'_1,\tilde{z}),(w'_1,\tilde{w})).
 \end{equation}
 Notice that  $F_i((z'_1,\tilde{z}),(w'_1,\tilde{w}))$ only depends on the first $i-2$ variables of $\tilde{z}$ and $\tilde{w}$ and so $F_2((z'_1,\tilde{z}),(w'_1,\tilde{w}))$ is constant. Thus the product $\tast:\real^{d-1}\times\real^{d-1}\to\real^{d-1}$ given by
  \begin{equation}\label{menprod}
 \tilde{z}\tast\tilde{w}=\tilde{z}+\tilde{w}+F_{(z'_1,w'_1)}(\tilde{z},\tilde{w}),
 \end{equation}
 has the form \eqref{esc}. Notice that the product $\tast$ depends on the choice of $z'_1,w'_1$.
 
\begin{lemma}\label{lem}
Let $\ast:\real^d\times\real^d\to\real^d$ be a product of the form \eqref{esc} and let $A,B\subset \real^d$ be $A=I\times\tilde{A}$, and $B=J\times\tilde{B}$, where $I,J$ are compact intervals in $\real$ and $\tilde{A},\tilde{B}\subset\real^{d-1}$ are measurable. Then 
\begin{equation}\label{lemma} 
	|A\ast B|\geq|I+J|\mathfrak{L}^{d-1}\left(\tilde{A}\tast\tilde{B} \right),
\end{equation}
where $\tast$ is the product described in \eqref{menprod} for certain $z_1'\in I$ and $w'_1\in J$. Moreover, if $F$ does not depends on $z_1, w_1$, then equality holds in \eqref{lemma}.
\end{lemma}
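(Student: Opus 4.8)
The plan is to slice $A\ast B$ along its first coordinate and integrate. By the form \eqref{esc} the first coordinate of $z\ast w$ is $z_1+w_1+F_1$, and since $F_1$ is a constant the first coordinate of any element of $A\ast B$ ranges exactly over the interval $I+J+F_1$, a translate of $I+J$ of length $|I+J|$. I would therefore bound the volume of $A\ast B$ from below by integrating the measures of its horizontal slices over this interval (and we may assume $|A\ast B|<\infty$, else \eqref{lemma} is trivial).

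First I would carry out the elementary but crucial computation of a single product. Writing $a=(z_1,\tilde z)\in A$ and $b=(w_1,\tilde w)\in B$ with $z_1\in I$, $w_1\in J$, $\tilde z\in\tilde A$, $\tilde w\in\tilde B$, the definitions \eqref{fmin} and \eqref{menprod} give $a\ast b=(z_1+w_1+F_1,\ \tilde z\tast\tilde w)$, where $\tast$ is the reduced product \eqref{menprod} associated with the parameters $(z_1,w_1)$. Consequently, for each height $t$ and each pair $(z_1,w_1)\in I\times J$ with $z_1+w_1=t-F_1$, the slice $(A\ast B)_t=\{p\in\real^{d-1}:(t,p)\in A\ast B\}$ contains $\tilde A\tast\tilde B$ for that choice of parameters. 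By monotonicity of measure this yields $\mathfrak{L}^{d-1}((A\ast B)_t)\ge \mathfrak{L}^{d-1}(\tilde A\tast\tilde B)$ for every admissible $(z_1,w_1)$ lying on the line $z_1+w_1=t-F_1$.

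To extract a single pair $(z_1',w_1')$ as in the statement I would use an averaging argument rather than a minimization. By Fubini, $|A\ast B|=\int_{I+J+F_1}\mathfrak{L}^{d-1}((A\ast B)_t)\,dt$, so the slice measure lies below its average $|A\ast B|/|I+J|$ on a set of heights of positive measure; since the Fubini-exceptional heights form a null set, I can choose such a $t^{*}$ at which $\mathfrak{L}^{d-1}((A\ast B)_{t^{*}})$ is a genuine slice measure. Picking any $z_1'\in I$, $w_1'\in J$ with $z_1'+w_1'=t^{*}-F_1$ (these exist because $t^{*}-F_1\in I+J$ and $I,J$ are intervals), the previous step gives $\mathfrak{L}^{d-1}(\tilde A\tast\tilde B)\le\mathfrak{L}^{d-1}((A\ast B)_{t^{*}})\le |A\ast B|/|I+J|$, which rearranges to \eqref{lemma}. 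For the equality clause, if $F$ does not depend on $z_1,w_1$ then $\tast$ is the same product for every parameter value, so for every interior height $t$ the slice $(A\ast B)_t$ equals $\tilde A\tast\tilde B$ exactly; the integrand is then constant and Fubini gives equality.

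The step I expect to require the most care is the measure theory underlying the slicing. Since $A\ast B$ is a continuous image of the merely measurable set $A\times B$, its Lebesgue measurability is not automatic, and I would handle this either by phrasing the bound in terms of Lebesgue inner measure—for which both the slice lower bound and the averaging selection remain valid—or by a preliminary reduction to closed $\tilde A,\tilde B$. Granting this, the geometric core of the argument—the slice containment coming from the product computation together with the averaging choice of $(z_1',w_1')$—is routine; the only genuine subtlety is to make the selection of $t^{*}$ among the Fubini-generic heights so that $\mathfrak{L}^{d-1}((A\ast B)_{t^{*}})$ is a bona fide $(d-1)$-dimensional measure dominating $\mathfrak{L}^{d-1}(\tilde A\tast\tilde B)$.
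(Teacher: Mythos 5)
Your argument is correct, and its skeleton is the same as the paper's: slice $A\ast B$ along the first coordinate, observe from \eqref{esc} that the slice over a height $t$ is exactly the union of the sets $\tilde{A}\tast\tilde{B}$ (for the products built from the various $(z_1,w_1)\in I\times J$ with $z_1+w_1=t-F_1$), and compute $|A\ast B|$ by Fubini. Where you genuinely depart from the paper is in how the pair $(z_1',w_1')$ is extracted. The paper fixes an explicit affine selection $z_1(s_1)\in I\cap(s_1-J)$, asserts that $s_1\mapsto\mathfrak{L}^{d-1}\bigl(D_{(s_1,z_1(s_1))}\bigr)$ is continuous, and bounds the integrand from below by the minimum of this function; you instead use an averaging (pigeonhole) argument to find a height $t^*$ whose slice has measure at most $|A\ast B|/|I+J|$, and take any admissible pair on the line $z_1+w_1=t^*-F_1$. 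This buys something real: it bypasses the continuity claim for $f$, which is the one step of the paper's proof that is not immediate when $\tilde{A},\tilde{B}$ are merely measurable (the measure of the image of a fixed set under a continuously varying family of maps need not vary continuously), and it requires no selection function at all. Two small repairs: dispose of the degenerate case $|I+J|=0$ separately before dividing by it (the right-hand side of \eqref{lemma} then vanishes and the claim is trivial); and for the measurability caveat the correct tool is \emph{outer} measure via a measurable hull $H\supseteq A\ast B$ (so that $\mathfrak{L}^{d-1}\bigl((A\ast B)_t\bigr)\leq\mathfrak{L}^{d-1}(H_t)$ and $\int\mathfrak{L}^{d-1}(H_t)\,dt=|H|$), since with inner measure the Fubini inequality runs in the wrong direction for your averaging step — though in the only place the lemma is invoked, Step 1 of Theorem \ref{prodbrunmin}, all sets are compact and the issue evaporates. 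Your treatment of the equality case coincides with the paper's.
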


\begin{proof}
Let $I=[a,b]$, $J=[a',b']$ and $l=b-a$, $l'=b'-a'$. The product is
	\begin{equation*}
	A\ast B=\left\lbrace z+w+F(z,w) : z_1\in I,\ w_1\in J, \ \tilde{z}\in \tilde{A}, \ \tilde{w}\in\tilde{B} \right\rbrace .
	\end{equation*}	
	We define a diffeomorphism $\phi:\real^2\to\real^2$ by $(s,z)\mapsto(z,s-z)$. The inverse $\phi^{-1}(z,w)=(z+w,z)$ is a diffeomorphism between the sets $I\times J$ and $\{(s_1,z_1) : s_1\in I+J, \ z_1\in I\cap (s_1-J)=K(s_1)\}$. Recall that  $F_{(z_1,w_1)}$ is defined in \eqref{fmin}, the change of variables gives us
	\begin{multline*}
	A\ast B=\Big\{  (s_1,\tilde{z}+\tilde{w})+(F_1,F_{\phi(s_1,z_1)})(\tilde{z},\tilde{w}) :  s_1\in I+J,\\   \tilde{z}\in \tilde{A},  z_1\in K(s_1),   \tilde{w}\in\tilde{B}\Big\}.
	\end{multline*}
	Now we use Fubini's Theorem and we obtain
	\begin{equation}\label{fm}
	|A\ast B|=\int_{I+J}h(s_1)ds_1,
	\end{equation}
	where $h:I+J\rightarrow\real^+_0$ is the function
	\begin{equation*}
	h(s_1)=\mathfrak{L}^{d-1}\left(\{\tilde{p}\in\real^{d-1} : (s_1+F_1,\tilde{p})\in A\ast B \}\right)=\mathfrak{L}^{d-1}\Bigg(\bigcup_{z_1\in K(s_1)}D_{(s_1,z_1)} \Bigg),
	\end{equation*}
	and 
	\begin{equation}\label{de}
	D_{(s_1,z_1)}=\{\tilde{z}+\tilde{w}+F_{\phi(s_1,z_1)}(\tilde{z},\tilde{w}) : \ \tilde{z}\in \tilde{A}, \ \tilde{w}\in\tilde{B} \}.\\
	\end{equation}
	
	Now we compare the measure of $A\ast B$ with the measure of $D_{(s_1,z_1)}$ for some $s_1,z_1$.
 	Let $z_1:I+J\rightarrow\real$ be the function
	\[
	z_1(s_1)=\frac{s_1-(a+a')}{l+l'}l+a,
	\] 	
 	which is the composition of the inverse of the parametrization of $I+J$ given by $t\mapsto t(l+l')+(a+a')$ where $t$ is in $[0,1]$, with the parametrization of $I$, $t\mapsto tl+a'$. Since $\frac{s_1-(a+a')}{l+l'}\in[0,1]$, $z_1(s_1)\in I$. Besides
 	\begin{align*}
 	s_1-\frac{s_1-(a+a')}{l+l'}l-a&=\frac{l'(s_1-a)+a'l}{l+l'} \\
 	&=\frac{l'(t(l+l')+a')+a'l}{l+l'}=tl'+a',
 	\end{align*}
 	and therefore $s_1-z_1(s_1)\in J\Leftrightarrow z_1(s_1)\in s_1-J$. Then $z_1(s_1)\in K(s_1)$. 
 	 
 Let $f:I+J\rightarrow\real^+_0$ be the map given by $f(s_1)=\mathfrak{L}^{d-1}(D_{(s_1,z_1(s_1))})$. It is easy to check that $f$ is continuous, hence $f$ reaches its minimum at $s_1'$.
	\begin{equation}\label{des}	
	\int_{I+J}h(s_1)ds_1\geq\int_{I+J}f(s_1)ds_1\geq\int_{I+J}f(s'_1)ds_1=|I+J|f(s'_1).
	\end{equation}
	Denoting $z'_1:=z_1(s'_1), w'_1:=s'_1-z'_1$, we have that $F_{\phi(s'_1,z'_1)}=F_{(z'_1,w'_1)}$  and $D_{(s'_1,z'_1)}=\tilde{A}\tast\tilde{B}$. 
	
	If  $F$ does not depend on $z_1, w_1$, then $F_{\phi(s'_1,z'_1)}=F_{\phi(s_1,z_1)}$ for all $s_1$ and $z_1$, and therefore
\begin{equation*}
D_{(s'_1,z'_1)}=D_{(s_1,z_1(s_1))}=\bigcup_{z_1\in K(s_1)}D_{(s_1,z_1)}, 
\end{equation*}
this implies  $f(s'_1)=f(s_1)=h(s_1)\ \forall s_1\in I+J$, and the equality  holds in \eqref{des}.
\end{proof}
\begin{remark}
	Lemma \ref{lem} guarantees
	\begin{equation*}
	|A\ast B|\geq|I+J|\mathfrak{L}^{d-1}\left(\tilde{A}\tast\tilde{B}\right),
	\end{equation*}
	where $z'_1\in I$ $w'_1\in J$. Besides the product $\ast_{z_1',w_1'}$ does not depend on $z_1,w_1$ and therefore
	\begin{equation}\label{astast'}
	|A\ast B|\geq|I+J|\mathfrak{L}^{d-1}\left(\tilde{A}\tast\tilde{B}\right)=|A\ast_{z_1',w_1'}B|.
	\end{equation}	
	Recall that $\ast_{z_1',w_1'}$ acts as a sum in the first two coordinates, and someway \eqref{astast'} allows us to compare the measure of $A\ast B$ with the measure of a set more similar to the Euclidean Minkowski addition of $A$ and $B$.
\end{remark}	

\begin{theorem}[Brunn-Minkowski inequality for \eqref{esc} products]\label{prodbrunmin}
	Let $\ast:\real^d\times\real^d\to\real^d$ be a product of the form \eqref{esc} and let $A,B\subset \real^d$ be measurable sets such that $A\ast B$ is measurable. Then we have
	\begin{equation}\label{BM}
	|A \ast B|^{1/d}\geq |A|^{1/d}+|B|^{1/d}.
	\end{equation}
\end{theorem}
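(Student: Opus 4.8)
The plan is to prove \eqref{BM} by induction on the dimension $d$, using Lemma~\ref{lem} to descend from dimension $d$ to dimension $d-1$ and running a Hadwiger--Ohmann cutting argument within each dimension. First I would settle the base case $d=1$: here $z\ast w=z+w+F_1$, so $A\ast B$ is a translate of the Minkowski sum $A+B$ and \eqref{BM} reduces to the one-dimensional estimate $|A+B|\ge|A|+|B|$. I would also record the reductions used throughout. For fixed $a$ the map $w\mapsto a\ast w$ is a triangular shear $w\mapsto w+G(w)$ with $G_i$ depending only on $w_1,\dots,w_{i-1}$, hence measure-preserving; this gives $|a\ast B|=|B|$ and disposes of the degenerate cases $|A|=0$ or $|B|=0$. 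Finally, by inner regularity of Lebesgue measure it suffices to prove \eqref{BM} for $A,B$ finite unions of compact boxes, since such sets have compact (hence measurable) $\ast$-image contained in the measurable set $A\ast B$, and one passes to the supremum over compact inner approximations.

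For the inductive step I assume \eqref{BM} for every product of the form \eqref{esc} on $\real^{d-1}$ and prove it in dimension $d$ for finite unions of boxes. The structural point is that only the first coordinate is additive, since $(z\ast w)_1=z_1+w_1+F_1$ with $F_1$ constant while $F_2,\dots,F_d$ need not be; accordingly every cutting hyperplane must be taken perpendicular to the $x_1$-axis. I would induct on the number $N(A)+N(B)$ of distinct $x_1$-slabs. If, say, $N(A)\ge 2$, I pick an interior $x_1$-breakpoint $c$ of $A$ and set $A^+=A\cap\{x_1\ge c\}$, $A^-=A\cap\{x_1\le c\}$, so that $N(A^+)+N(A^-)=N(A)$ with both strictly smaller than $N(A)$; then I choose $c'$ by the intermediate value theorem so that $B^+=B\cap\{x_1\ge c'\}$, $B^-=B\cap\{x_1\le c'\}$ satisfy $|A^+|/|A|=|B^+|/|B|=:\theta$. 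Since $N(B^+)\le N(B)$ and $N(B^-)\le N(B)$, each pair $(A^\pm,B^\pm)$ has strictly smaller total complexity. Additivity in $x_1$ forces $A^+\ast B^+\subset\{x_1\ge c+c'+F_1\}$ and $A^-\ast B^-\subset\{x_1\le c+c'+F_1\}$, which overlap only in a null set, so $|A\ast B|\ge|A^+\ast B^+|+|A^-\ast B^-|$. Inserting the inductive bounds $|A^\pm\ast B^\pm|\ge(|A^\pm|^{1/d}+|B^\pm|^{1/d})^d$ together with $|A^+|=\theta|A|$, $|B^+|=\theta|B|$, $|A^-|=(1-\theta)|A|$, $|B^-|=(1-\theta)|B|$ yields $|A\ast B|\ge(|A|^{1/d}+|B|^{1/d})^d$ exactly as in the classical argument.

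It remains to treat the base case of this inner induction, $A=I\times\tilde A$ and $B=J\times\tilde B$, and this is where Lemma~\ref{lem} and the dimension induction combine. Lemma~\ref{lem} produces a product $\tast$ of the form \eqref{esc} on $\real^{d-1}$ with $|A\ast B|\ge|I+J|\,\mathfrak{L}^{d-1}(\tilde A\tast\tilde B)$, and the inductive hypothesis applied to $\tast$ gives $\mathfrak{L}^{d-1}(\tilde A\tast\tilde B)\ge(|\tilde A|^{1/(d-1)}+|\tilde B|^{1/(d-1)})^{d-1}$. Writing $|I+J|=|I|+|J|$ and combining the two estimates is a one-line consequence of Young's inequality: with $\gamma=|I|/(|I|+|J|)$ and $\alpha=|\tilde A|^{1/(d-1)}/(|\tilde A|^{1/(d-1)}+|\tilde B|^{1/(d-1)})$ (and $\delta=1-\gamma$, $\beta=1-\alpha$ the complementary weights), the normalized left-hand side of \eqref{BM} is $\gamma^{1/d}\alpha^{(d-1)/d}+\delta^{1/d}\beta^{(d-1)/d}\le\tfrac1d(\gamma+\delta)+\tfrac{d-1}d(\alpha+\beta)=1$, which is precisely \eqref{BM} for the slab.

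The step I expect to be the main obstacle is the bookkeeping forced by the loss of additivity in the coordinates $x_2,\dots,x_d$. In the Euclidean proof one freely separates two boxes by a hyperplane perpendicular to whichever axis separates them; here that freedom is gone, and one must check that cutting only along $x_1$ still strictly decreases the complexity $N(A)+N(B)$ \emph{and} still produces $\ast$-images with disjoint interiors. The reason the scheme closes is that Lemma~\ref{lem} converts the single-slab case --- the one situation in which no further $x_1$-cut is available --- into a genuinely lower-dimensional problem, in which the second coordinate has become the additive one.
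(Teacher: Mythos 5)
Your main argument is correct, but it is organized differently from the paper's. The paper proves the box case by iterating Lemma~\ref{lem} through \emph{all} coordinates to get $|A\ast B|\ge|A+B|$ and then quotes the classical Euclidean Brunn--Minkowski inequality; in its Hadwiger--Ohmann step it allows cutting hyperplanes $\{z_i=a_i\}$ for any $i$, handling $i\ge 2$ by first passing to the frozen product $\ast_{z_1',w_1'}$ via the inequality $|A\ast B|\ge|A\ast_{z_1',w_1'}B|$. You instead run a double induction: an outer induction on the dimension $d$ and an inner induction on the number of $x_1$-slabs, cutting \emph{only} perpendicular to the single coordinate in which $\ast$ is genuinely additive, and you dispose of the single-slab case by one application of Lemma~\ref{lem} followed by the $(d-1)$-dimensional theorem and a weighted AM--GM computation. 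Your version buys a cleaner cutting step --- you never need to justify disjointness of images for cuts in the non-additive coordinates, which is the delicate ``repeat the reasoning'' part of the paper's Step~2 --- at the price of carrying the dimension induction explicitly and replacing the appeal to the Euclidean inequality by the (correct) computation $\gamma^{1/d}\alpha^{(d-1)/d}+\delta^{1/d}\beta^{(d-1)/d}\le 1$. The bookkeeping claim $N(A^+)+N(A^-)=N(A)$ needs the slabs to be defined by the full set of $x_1$-breakpoints (or a dyadic structure, as in the paper), but that is routine.

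There is, however, a genuine error in your reduction to finite unions of boxes: you cannot pass from measurable (or even compact) sets to unions of boxes by \emph{inner} approximation, because a compact set of positive measure need not contain any box at all (a fat Cantor set, for instance), so $\sup\{|D|: D\subseteq A,\ D \text{ a finite union of boxes}\}$ can be $0$ while $|A|>0$. The inner-regularity step legitimately reduces you to compact $A,B$ (since $K_A\ast K_B$ is compact and contained in $A\ast B$), but from there you must go \emph{outward}: choose an open $O\supseteq A\ast B$ with $|O\setminus(A\ast B)|<\varepsilon$, use continuity of $\ast$ and compactness to find open $O_A\supseteq A$, $O_B\supseteq B$ with $O_A\ast O_B\subseteq O$, and then take finite unions of dyadic boxes $D_A,D_B$ with $A\subseteq D_A\subseteq O_A$ and $B\subseteq D_B\subseteq O_B$, so that $|A\ast B|+\varepsilon\ge|D_A\ast D_B|\ge(|D_A|^{1/d}+|D_B|^{1/d})^d\ge(|A|^{1/d}+|B|^{1/d})^d$. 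This is essentially the paper's Step~3, and once substituted for your supremum over inner box-approximations the proof closes.
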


\begin{proof}
	The proof is divided into three steps.
	
	Step 1. We first claim that inequality holds in \eqref{BM} for a pair of $d$-rectangles $A$ and $B$, that is,
	\begin{equation*}
	\begin{split}
	A=&I_1\times\dots\times I_d \\
	B=&J_1\times\dots\times J_d.
	\end{split}
	\end{equation*}
	where $I_i,J_j$ are compact intervals $\forall\ 1\leq i,j\leq d$. 
	 We shall see that 
	\begin{equation}\label{asum}
	|A\ast B|\geq|I_1+J_1|\ldots|I_d+J_d|=|A + B|,
	\end{equation}
	and the classical Brunn-Minkowski inequality in $\real^d$ would imply \eqref{BM}.

	In order to prove \eqref{asum}, we use Lemma \ref{lem} to obtain
	\begin{equation*}
	|A\ast B|\geq|I_1+J_1|\mathfrak{L}^{d-1}\left(\tilde{A}\tast\tilde{B} \right)
	\end{equation*}
	but now $\tilde{A}=I_2\times(I_3\times\ldots\times I_d),\tilde{B}=J_2\times(J_3\times\ldots\times J_d)$ and $\tast$ has the form \eqref{esc} so we can apply Lemma \ref{lem} to the sets $\tilde{A}$ and $\tilde{B}$. Iterating this process, we get \eqref{asum}.

Step 2. Now we consider the case where $A$ and $B$ are finite unions of dyadic $d$-rectangles, that is, $A=A_1\cup\ldots\cup A_n$, $B=B_1\cup\ldots\cup B_m$ where $A_i=I^i_1\times\ldots\times I^i_d$, $B_j=J^j_1\times\ldots\times J^j_d$ and, for any $k=1,\ldots,d$ and $r\neq s$ ($p\neq q$), it is satisfied that $\text{int}(I^r_k)\cap\text{int} (I^s_k)=\emptyset$ or $I^r_k=I^s_k$ ($\text{int}(J^p_k)\cap \text{int}(J^q_k)=\emptyset$ or $J^p_k=J^q_k$), where $\text{int}(I)$ denotes the interior of $I$.

We reason by induction on the total number $n+m$ of $d$-rectangles. If $n+m=2$, then $A$ and $B$ are $d$-rectangles and we can apply step 1. Suppose that the Theorem holds for $n+m-1>2$. Then  we can find a hyperplane  $P: \{z_i=a_i \}$ such that some $A_r\subset\{z_i\geq a_i\}$ and some $A_s\subset\{z_i\leq a_i \}$. 

If the hyperplane has as equation $P: \{z_1=a_1 \}$, the proof is the same as the classical proof of Hadwiger and Ohmann for the sum of sets in $\real^d$. We include it for the sake of completeness. The sets $A^+=A\cap\{z_1\geq a_1 \}$ and $A^-=A\cap \{z_1\leq a_1 \}$ are unions of $d$-rectangles whose sum is strictly less than $n$. We choose a parallel hyperplane $Q:\{z_1=b_1\}$ verifying that, setting $B^+=\{ z_1\geq b_1\}$ and $B^-=\{z_1\leq b_1 \}$, then
\begin{equation}\label{vol1}
 \frac{|B^{\pm}|}{|B|}=\frac{|A^{\pm}|}{|A|}.
\end{equation}
Besides $B^+$ and $B^-$ are disjoint unions of $d$-rectangles whose sum is at most $m$.
 We apply the induction hypothesis to the pairs $A^+,B^+$ and $A^-,B^-$, and we obtain
\begin{equation}\label{vol2}
\begin{split}
|A^+\ast B^+|&\geq(|A^+|^{1/d}+|B^+|^{1/d})^d \\
|A^-\ast B^-|&\geq(|A^-|^{1/d}+|B^-|^{1/d})^d.
\end{split}
\end{equation}
On the other hand, $P\ast Q: \{z_1=a_1+b_1\}$ is another vertical plane in $\real^d$,  $A^+\ast B^+\subset (P\ast Q)^+$, and $A^-\ast B^-\subset (P\ast Q)^-$. Therefore $A^+\ast B^+$ and $A^-\ast B^-$ are disjoint sets (up to a null set) in $A \ast B$. Combining this with \eqref{vol1} and \eqref{vol2} we get the inequality
\begin{equation*}
\begin{split}
|A\ast B|&\geq|A^+\ast B^+|+|A^-\ast B^-|\\
&\geq (|A^+|^{1/d}+|B^+|^{1/d})^d+(|A^-|^{1/d}+|B^-|^{1/d})^d \\
&=(|A^+|+|A^-|)\left[1+ \left(\frac{|B|}{|A|} \right)^{1/d}\right]^d\\
&=(|A|^{1/d}+|B|^{1/d})^d,
\end{split}
\end{equation*}
and the Theorem is proved for such $A$ and $B$.

If there is no such hyperplane with equation $P: \{z_1=a_1 \}$ but with equation $P: \{z_2=a_2 \}$, then for any $u,v,p,q$,  $I^u_1= I^v_1=I_1$, $J^p_1= J^q_1=J_1$ and for some $r\neq s$, $\text{int}(I^r_2)\cap \text{int}(I^s_2)=\emptyset$, and we can write
\begin{equation*}
\begin{split}
A=&\bigcup_i I_1\times I^i_{2}\times\ldots\times I^i_d=I_1\times \left(\bigcup_i I^i_{2}\times\ldots\times I^i_d\right)=I_1\times\tilde{A}\\
B=&\bigcup_j J_1\times J^j_{2}\times\ldots\times J^j_d=J_1\times \left(\bigcup_j J_{2}^j\times \ldots\times J^j_d\right)=J_1\times \tilde{B}.
\end{split}
\end{equation*}
We have seen in \eqref{astast'} that
\[
|A\ast B|\geq|A\ast_{z_1',w_1'}B|.
\]
 Now we repeat the above argument, where now we apply the induction hypothesis to the product $\ast_{z_1',w_1'}$, thus the sets $A^{+}\ast_{z_1',w_1'}B^+$ and $A^-\ast_{z_1',w_1'}B^-$ are disjoint (up to a null set). Thus by \eqref{astast'} we obtain
\begin{equation*}
|A\ast B|\geq|A\ast_{z_1',w_1'}B|\geq|A^+\ast_{z_1',w_1'}B^+|+|A^-\ast_{z_1',w_1'}B^-|\geq(|A|^{1/d}+|B|^{1/d}|)^{1/d}.
\end{equation*}
and the result is proved. 

Repeating this reasoning we have covered the general case where $P: \{z_i=a_i \}$.

Step 3. Assume now that $A$ and $B$ are measurable sets such that $A\ast B$ is measurable. We can suppose that $A,B$ and $A\ast B$ have finite measure, since otherwise the inequality is trivial. Fix $\varepsilon>0$ and take an open set $O$ such that $A\ast B\subset O$ and $|O\setminus A\ast B|<\varepsilon$. Since $\ast$ is continuous, there exist open sets $O_A\supset A$ and $O_B\supset B$ such that $|O_{A}\setminus A|<\varepsilon$, $|O_{B}\setminus B|<\varepsilon$ and $O_A\ast O_B\subset O$. Now we approximate the sets $O_A$ and $O_B$ from inside by dyadic $d$-rectangles, $D_A$ and $D_B$ so that $|O_A \setminus D_A|<\varepsilon$, $|O_B \setminus D_B|<\varepsilon$. We use step 2 for $D_A$ and $D_B$. Taking $\varepsilon\rightarrow 0$ gives \eqref{BMN}.
\end{proof}

As a particular case, we have the Brunn-Minkowski inequality in nilpotent groups.

\begin{theorem}[Brunn-Minkowski inequality in nilpotent groups]\label{nilbrunnmin}
	Let $G$ be a  nilpotent group of topological dimension $d$ and let $A,B\subset G$ be measurable sets such that $A\cdot B$ is measurable. Then we have
	\begin{equation}\label{BMN}
	|A\cdot B|^{1/d}\geq |A|^{1/d}+|B|^{1/d}.
	\end{equation}
\end{theorem}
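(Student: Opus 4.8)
The plan is to transport the statement from $G$ to $\real^d$ through the canonical coordinates of the first kind, where the result reduces immediately to Theorem \ref{prodbrunmin}. First I would fix a strong Malcev basis $\{X_1,\ldots,X_d\}$ of $\gg$ (Lemma \ref{seq}), so that $\log\colon G\to\real^d$ is a diffeomorphism and the associated multiplication $z\ast w=\log(\exp(z)\cdot\exp(w))$ is well defined. By Theorem \ref{multi} this product has exactly the polynomial form \eqref{1.130}; since polynomials are continuous and the dependence of each $P_i$ on $z_1,\ldots,z_{i-1},w_1,\ldots,w_{i-1}$ is precisely the triangular structure demanded in \eqref{esc}, the product $\ast$ is a product of the form \eqref{esc}.

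Next I would record the key set identity $\log(A\cdot B)=\log(A)\ast\log(B)$. Indeed, since $\exp$ is a bijection, as $z$ ranges over $\log(A)$ the point $\exp(z)$ ranges over $A$, and likewise $\exp(w)$ over $B$; hence $\exp(z)\cdot\exp(w)$ ranges over $A\cdot B$, and applying $\log$ yields the claim. Because $\log$ is a diffeomorphism it preserves measurability, so $\log(A)$ and $\log(B)$ are measurable, and $\log(A)\ast\log(B)=\log(A\cdot B)$ is measurable precisely because $A\cdot B$ is.

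Finally I would invoke Proposition \ref{mappreserving}, which identifies the Haar measure $\mu$ with Lebesgue measure under $\log$, to write $|A\cdot B|=|\log(A)\ast\log(B)|$, $|A|=|\log(A)|$ and $|B|=|\log(B)|$. Theorem \ref{prodbrunmin}, applied to the \eqref{esc} product $\ast$ and the measurable sets $\log(A),\log(B)$, then gives
\[
|\log(A)\ast\log(B)|^{1/d}\ge|\log(A)|^{1/d}+|\log(B)|^{1/d},
\]
which is exactly \eqref{BMN} after the substitutions above.

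I do not expect a serious obstacle here: all the analytic difficulty has already been absorbed into Theorem \ref{prodbrunmin}, and what remains is a transfer argument. The only points demanding care are checking that the triangular polynomial product of Theorem \ref{multi} genuinely falls under the hypotheses of \eqref{esc}, and verifying the measurability transfer under $\log$; both are routine consequences of $\log$ being a diffeomorphism and of polynomials being continuous.
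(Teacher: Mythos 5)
Your proposal is correct and follows essentially the same route as the paper: transport $A,B$ to $\real^d$ via $\log$, note that the Malcev product \eqref{1.130} is a product of the form \eqref{esc}, use Proposition \ref{mappreserving} to identify Haar and Lebesgue measures, and conclude by Theorem \ref{prodbrunmin}. The only cosmetic difference is that the paper cites Theorem \ref{layer} where Theorem \ref{multi} already suffices, as you observe.
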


\begin{proof}
	 We denote $\aa=\log(A)$, $\bb=\log(B)$. Using Theorem~\ref{layer},  Proposition \ref{mappreserving} and Theorem \ref{prodbrunmin}, we have
	\begin{equation*}
	\begin{split}
	|A\cdot B|&=|\log(A\cdot B)|=|\log(\exp(\aa)\cdot\exp(\bb))|=|\aa \ast \bb|\geq(|\aa|^{1/d}+|\bb|^{1/d})^d \\
	&=(|A|^{1/d}+|B|^{1/d})^d. \qedhere
	\end{split}
	\end{equation*}
\end{proof}

\begin{remark}
In \cite{MR2177813}, Leonardi and Masnou considered only the hyperplanes $P:\{z_i=a_i\}$ on those coordinates where the product acts as a sum, the first $2n$ ones. Then for an open set $O\subseteq\HH^{n}\equiv\real^{2n+1}$, they consider the dyadic approximation and join all the cubes with the same projection on the first $2n$ coordinates. As a result, they obtain the generalized cube and use step 2. However, If we do this for general $O\subseteq\real^d$, projecting on the first $n_1$ coordinates, corresponding to the first layer where we have seen that the product acts as a sum in Theorem \ref{layer}, the union of the cubes takes the form 
	\[
	\bigcup_i I_1\times\ldots\times I_{n_1}\times I^i_{n_1+1} \times\ldots\times I^i_d=I_1\times\ldots\times I_{n_1}\times \left(\bigcup_i I^i_{n_1+1}\times\ldots\times I^i_d\right).
	\]
	This set is not usually a generalized cube, and we are not allowed to use step 2.
\end{remark}	

\begin{remark}
	Since the right-hand side of \eqref{BMN} is symmetric in $A$ and $B$, and it follows
	\[ \min\{|A\cdot B|, |B\cdot A| \}^{1/d}\geq |A|^{1/d}+|B|^{1/d}.  \]
	An example where $|A\cdot B|$ and $|B\cdot A|$ are different can be found in \cite{MR2177813}.
\end{remark}


\subsection{A sufficient condition for strict inequality in the Heisenberg group}
A set $A$ in the Heisenberg group $\HH^n$  of the form $A=A_1\times A_2$, where $A_1$ is a measurable set in $\real^{2n}$ and $A_2$ is a measurable set in $\real$ is called a \emph{generalized cylinder}.

In this subsection we prove in Lemma \ref{mayor} that the Brunn-Minkowski inequality \eqref{BMN} is strict in the Heisenberg group  for a pair of generalized cylinders $A$ and $B$ such that the volumes of $A_1$ and $B_1$ are positive. 

Recall that a point $a$ in $\real^d$ is a \emph{density point of $A$} if 
\[
\lim_{r\to0}\frac{|A\cap B(a,r)|}{|B(a,r)|}=  1,
\]
where $B(a,r)$ is the Euclidean ball of center $a$ and radius $r$. The set of density points of a set $A$ will be denoted as $A^o$. We can always normalize a set by including its density points in the set. The existence of a density point in $A$ 
implies that the volume of $A$ is positive.
\begin{proposition}\label{mayor}
Let $A,B\subset\HH^n$ be generalized cylinders. Suppose that $|A_1|>0$, $|B_1|>0$. Then 
\begin{equation}\label{mayorBM}
|A\cdot B|>|A+B|.
\end{equation}
\end{proposition}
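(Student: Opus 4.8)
The plan is to isolate the extra contribution of the group product over the Euclidean Minkowski sum in the one coordinate where they differ, and to show that this contribution is strictly positive whenever $A_1$ and $B_1$ have positive volume. Working in exponential coordinates, a generalized cylinder is $A = A_1 \times A_2$ with $A_1 \subset \real^{2n}$ and $A_2 \subset \real$, and by Theorem~\ref{layer} the product $\ast$ acts as ordinary addition in the first $2n$ coordinates, with the only nonlinear term appearing in the last coordinate. So I would write the Heisenberg product as $z \ast w = (\tilde z + \tilde w,\; z_{2n+1} + w_{2n+1} + F_{2n+1}(\tilde z, \tilde w))$, where $\tilde z = (z_1,\ldots,z_{2n})$ and $F_{2n+1}$ is the standard symplectic form $\tfrac12 \sum_i (z_i w_{n+i} - z_{n+i} w_i)$ (up to normalization).

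First I would fix the projection onto the first $2n$ coordinates: for each point $\tilde u$ in $A_1 + B_1$, the fiber of $A \ast B$ over $\tilde u$ is the union over all decompositions $\tilde u = \tilde z + \tilde w$ with $\tilde z \in A_1$, $\tilde w \in B_1$ of the translated interval $A_2 + B_2 + F_{2n+1}(\tilde z, \tilde w)$. By Fubini (as in \eqref{fm}), $|A \ast B| = \int_{A_1 + B_1} h(\tilde u)\, d\tilde u$, where $h(\tilde u)$ is the one-dimensional measure of this union. The corresponding formula for the Euclidean sum $A + B$ gives fibers that are merely translates $A_2 + B_2$, all of the same length $|A_2 + B_2|$. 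Since each fiber of $A \ast B$ is a union of translates of $A_2 + B_2$, I always have $h(\tilde u) \geq |A_2 + B_2| = \mathfrak{L}^1(A_2 + B_2)$ pointwise, which reproves $|A \ast B| \geq |A + B|$; the task is to make the inequality strict.

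**The hard part will be** producing a set of positive measure of $\tilde u$ on which the fiber of $A \ast B$ strictly exceeds $|A_2 + B_2|$. The key observation is that $F_{2n+1}(\tilde z, \tilde w)$ is genuinely nonconstant as $(\tilde z, \tilde w)$ ranges over a decomposition: if for some $\tilde u$ the values $F_{2n+1}(\tilde z, \tilde w)$ spread over an interval of positive length as $(\tilde z,\tilde w)$ varies over decompositions of $\tilde u$, then the union of the correspondingly shifted copies of $A_2 + B_2$ is strictly longer than a single copy. I would invoke density points: since $|A_1|, |B_1| > 0$, pick density points $\tilde a$ of $A_1$ and $\tilde b$ of $B_1$; near $\tilde u_0 = \tilde a + \tilde b$ there is a positive-measure set of decompositions, and because the symplectic form $F_{2n+1}$ is a nondegenerate bilinear form (hence nonconstant on any neighborhood of $(\tilde a, \tilde b)$ intersected with the decomposition set), the shift values $F_{2n+1}(\tilde z, \tilde w)$ cannot all coincide. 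Quantifying this — showing the oscillation of $F_{2n+1}$ over the decomposition fiber is bounded below on a positive-measure neighborhood of $\tilde u_0$ — is the crux.

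Concretely, I would argue that for $\tilde u$ near $\tilde u_0$ there exist two decompositions $\tilde u = \tilde z_1 + \tilde w_1 = \tilde z_2 + \tilde w_2$ with both endpoints lying in $A_1 \times B_1$ (using the density-point property to guarantee enough mass on both sides) and with $F_{2n+1}(\tilde z_1, \tilde w_1) \neq F_{2n+1}(\tilde z_2, \tilde w_2)$; nondegeneracy of the symplectic form makes such a pair available. The union $(A_2 + B_2 + c_1) \cup (A_2 + B_2 + c_2)$ with $c_1 \neq c_2$ has length strictly greater than $|A_2 + B_2|$ (here I use that $A_2 + B_2$ is a bounded set, being a sum of sets inside finite-measure sets, so distinct translates cannot fully overlap for a well-chosen shift). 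Integrating this strict gain over the positive-measure set of such $\tilde u$ yields
\[
|A \ast B| = \int_{A_1 + B_1} h(\tilde u)\, d\tilde u > \int_{A_1 + B_1} \mathfrak{L}^1(A_2 + B_2)\, d\tilde u = |A + B|,
\]
which is \eqref{mayorBM} after translating back through $\log$. I expect the main obstacle to be the measure-theoretic bookkeeping that turns the pointwise nondegeneracy of the symplectic form into a uniform strict gain on a set of positive measure, rather than anything conceptually deep.
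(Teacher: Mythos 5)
Your overall strategy coincides with the paper's: Fubini over the first $2n$ coordinates, density points of $A_1$ and $B_1$ to produce a positive-measure set of $\tilde u$ with $|A_1\cap(\tilde u-B_1)|>0$, and the nondegeneracy of the symplectic term to make the vertical shifts vary over each fiber. The genuine gap is in the quantitative step that converts ``the shifts vary'' into a strict gain. You extract only \emph{two} decompositions with distinct shift values $c_1\neq c_2$ and conclude $\mathfrak{L}^1\bigl((A_2+B_2+c_1)\cup(A_2+B_2+c_2)\bigr)>\mathfrak{L}^1(A_2+B_2)$. This fails precisely when $|A_2+B_2|=0$, e.g.\ when $A_2$ and $B_2$ are single points, so that $A$ and $B$ are ``horizontal'' sets $A_1\times\{t\}$ --- a case allowed by the hypotheses (only $|A_1|,|B_1|>0$ is assumed) and one of the interesting ones: there $|A+B|=0$ and you must show $|A\cdot B|>0$, which two translates of a null set cannot give. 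The paper avoids this by proving the stronger statement that the whole set of shift values $I_{\tilde u}=\{F_{2n+1}(\tilde z,\tilde u-\tilde z):\tilde z\in K(\tilde u)\}$ has \emph{positive one-dimensional measure} (for $\tilde u\neq 0$ the map $\tilde z\mapsto F_{2n+1}(\tilde z,\tilde u-\tilde z)$ is a nonzero linear functional by antisymmetry of the symplectic form, and the image of a positive-measure set under such a map cannot be null), and then applying the one-dimensional Brunn--Minkowski inequality to get $h(\tilde u)\geq|A_2+B_2|+|I_{\tilde u}|$, a strict gain whether or not $|A_2+B_2|$ vanishes. With that replacement your integration step goes through: no uniform lower bound on the gain is needed, since a nonnegative measurable function positive on a set of positive measure has positive integral.

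A secondary slip: you justify that distinct translates of $A_2+B_2$ cannot fully overlap by claiming $A_2+B_2$ is bounded ``being a sum of sets inside finite-measure sets''; finite measure does not imply boundedness. The correct statement is that a set of \emph{finite positive} Lebesgue measure cannot be a.e.\ invariant under a nontrivial translation --- but note again that this only helps when $0<|A_2+B_2|<\infty$, which brings you back to the gap above.
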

\begin{proof}
	By Theorem \ref{layer} and Fubini's Theorem, we have
	\[
	|A\cdot B|=\int_{A_1+B_1}h(s_1)ds_1,
	\] 
	where $h(s_1)=|\{ t+t'+ \textrm{Im}(z\overline{(s_1-z)}) : t\in A_2, \ t'\in B_2, \ z\in K(s_1) \}|$. Denoting $s_1=(s_x,s_y)$ , we can see that $\textrm{Im}(z\overline{(s_1-z)})=\textrm{Im}(z\overline{s_1})=ys_x-xs_y$. 
	
	We assert that if $|K(s_1)|>0$, then $|\{\textrm{Im}(z\bar{s_1}) : z\in K(s_1)\}|>0$.
	To see that, we can take the diffeomorphism $\phi:\real^2\to\real^2$ given by $(x,y)\mapsto\big(ys_x-xs_y,\frac{x}{2s_x}-\frac{y}{2s_y}\big)$.
	Then  $|\textrm{Jac}(\phi)|=1$ and applying the change of variables formula to $\phi^{-1}$, we have 
	\[
	0<|K(s_1)|=\int_{\real^2}\chi_{K(s_1)}(z)dz=\int_{\real^2}\chi_{\phi(K(s_1))}(z)dz=|\phi(K(s_1))|.
	\]
	But if $|\phi(K(s_1))|>0$ then $|\pi_1(\phi(K(s_1)))|>0$ where $\pi_1(x,y)=x$, since for any set $O$, $|\pi_1(O)|=0$ implies $|O|\leq|\pi_1(O)\times\real|=0$. Hence $$|\{ys_x-xs_y : (x,y)\in K(s_1) \}|=|\pi_1(\phi(K(s_1)))|>0.$$
	 Let $I_{s_1}=\{ys_x-xs_y : (x,y)\in K(s_1) \}$. By the Brunn-Minkowski inequality in $\real$,
	\begin{multline*}
	h(s_1)=|\{ s_2+\textrm{Im}(z\overline{s_1}) : s_2\in A_2+B_2, \ z\in K(s_1) \}|\\
	\geq|\{ s_2+a : s_2\in A_2+B_2, \ a\in I_{s_1} \}|\geq |A_2+B_2|+|I_{s_1}|.
	\end{multline*}

	To complete the proof it remains to show that  $\{s_1\in A_1+B_1 : |K(s_1)|>0\}$ has positive measure.  Let $a\in  A_1^o$, $b\in  B_1^o$ and $s_1=a+b\in A_1^o+B_1^o$. Then  $a=s_1-b$ is a density point in $s_1-B_1$ 
	and therefore $a$ is a density point in $A_1\cap(s_1-B_1)=K(s_1)$ which implies that $|K(s_1)|>0$. Finally $A_1^o+B_1^o\subseteq A_1+B_1$ has positive measure since  $|A_1^o+B_1^o|\geq|A_1^o|=|A_1|>0$, and $|\{s_1\in A_1+B_1 : |K(s_1)|>0\}|\geq|\{s_1\in A_1^o+B_1^o : |K(s_1)|>0\}|>0$.
\end{proof}	
\begin{remark}
	In order to characterize the equality in \eqref{BMN} for generalized cylinders, we can distinguish several cases. If $A$ and $B$ lie in parallel vertical hyperplanes, then $|A\cdot B|=0$ and we have the equality in \eqref{BMN}. If $A$ and $B$  are convex and homothetic then either $|A_1|>0$ and $B_1$ is a point and the equality holds, or $|A_1|>0$ and  $|B_1|>0$, and by Lemma \ref{mayor} the equality \eqref{mayorBM} does not hold, and therefore by the Euclidean Brunn-Minkowski inequality the equality does not hold in \eqref{BMN}. The same argument works if $A$ and $B$ lie in horizontal hyperplanes with $|A_1|>0$ and $|B_1|>0$. The case in which $A$ and $B$ lie in horizontal hyperplanes with  $|A_1|=0$ is not known in general. 
\end{remark}	

\section{Consequences}
	Another equivalent version of the Brunn-Minkowski inequality in Euclidean space is the Prékopa-Leindler inequality. Now we show how the proof
	of the Prékopa-Leindler inequality from the Brunn-Minkowski inequality can be adapted to the case of nilpotent groups.
	
\begin{theorem}[Prékopa-Leindler inequality in nilpotent groups]\label{prelei}
		Let $G$ be a nilpotent group of topological dimension $d$ with Haar measure $\mu$. Let $f,g,h:G\rightarrow\real^{+}_0$ be measurable functions and $0<\alpha<1$ verifying
	\begin{equation}\label{PL} h(a\cdot b)\geq f(a)^{1-\alpha}g(b)^\alpha \ \ \forall a,b\in G.\end{equation}
	Then
	\begin{equation}\label{pre-lei} 
	\int_G h d\mu \geq \frac{1}{(1-\alpha)^{d(1-\alpha)}\alpha^{d\alpha}}\left(\int_G fd\mu\right)^{1-\alpha}\left(\int_G g d\mu\right)^{\alpha}. 	
	\end{equation}
\end{theorem}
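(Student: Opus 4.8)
The plan is to transfer the statement to $\real^d$ and prove a Prékopa--Leindler inequality for products of the form \eqref{esc}, exactly as Theorem \ref{nilbrunnmin} was deduced from Theorem \ref{prodbrunmin}. Setting $\bar f=f\circ\exp$, $\bar g=g\circ\exp$, $\bar h=h\circ\exp$ and using $\exp(z)\cdot\exp(w)=\exp(z\ast w)$, hypothesis \eqref{PL} becomes $\bar h(z\ast w)\geq\bar f(z)^{1-\alpha}\bar g(w)^\alpha$ for the nilpotent product $\ast$, while Proposition \ref{mappreserving} turns every $\int_G\,\cdot\,d\mu$ into the Lebesgue integral of the corresponding barred function. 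Thus it suffices to show that for any product $\ast$ of the form \eqref{esc} on $\real^d$ and measurable $\bar f,\bar g,\bar h:\real^d\to\real^{+}_0$ with $\bar h(z\ast w)\geq\bar f(z)^{1-\alpha}\bar g(w)^\alpha$ one has $\int\bar h\geq C_d(\int\bar f)^{1-\alpha}(\int\bar g)^\alpha$, where $C_d=(1-\alpha)^{-d(1-\alpha)}\alpha^{-d\alpha}$. I would prove this by induction on $d$, peeling off the first coordinate exactly as in Lemma \ref{lem}, so that the reduced product $\tast$ of \eqref{menprod} --- again of the form \eqref{esc} on $\real^{d-1}$ --- carries the induction.

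For the base case $d=1$, formula \eqref{esc} forces $z\ast w=z+w+F_1$ with $F_1$ constant, so $\ast$ is ordinary addition up to a harmless translation. After normalizing $\sup\bar f=\sup\bar g=1$ (which rescales all three integrals consistently and preserves the hypothesis), the superlevel inclusion $\{\bar f>t\}\ast\{\bar g>t\}\subseteq\{\bar h>t\}$ holds for $t\in(0,1)$, since $\bar f(z),\bar g(w)>t$ gives $\bar h(z\ast w)\geq\bar f(z)^{1-\alpha}\bar g(w)^\alpha>t$. The one-dimensional case of Theorem \ref{prodbrunmin} yields $|\{\bar h>t\}|\geq|\{\bar f>t\}|+|\{\bar g>t\}|$; integrating in $t$ over $(0,1)$ by Cavalieri's principle gives $\int\bar h\geq\int\bar f+\int\bar g$, and the weighted arithmetic-geometric mean inequality $a+b\geq C_1 a^{1-\alpha}b^\alpha$ closes the case with $C_1=(1-\alpha)^{-(1-\alpha)}\alpha^{-\alpha}$.

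For the inductive step, write $z=(z_1,\tilde z)$, $w=(w_1,\tilde w)$. Since $\ast$ adds the first coordinate, for fixed $z_1,w_1$ the slices $\bar f_{z_1}(\tilde z)=\bar f(z_1,\tilde z)$, $\bar g_{w_1}$, $\bar h_{u_1}$ with $u_1=z_1+w_1+F_1$ satisfy $\bar h_{u_1}(\tilde z\tast\tilde w)\geq\bar f_{z_1}(\tilde z)^{1-\alpha}\bar g_{w_1}(\tilde w)^\alpha$ for the $(d-1)$-dimensional product $\tast$ of \eqref{menprod}. Applying the inductive hypothesis to this $\tast$ (legitimate because $\tast$ again has the form \eqref{esc}) produces, for the partial integrals $F(z_1)=\int\bar f_{z_1}$, $G(w_1)=\int\bar g_{w_1}$, $H(u_1)=\int\bar h_{u_1}$, the pointwise bound $H(u_1)\geq C_{d-1}F(z_1)^{1-\alpha}G(w_1)^\alpha$ whenever $z_1+w_1+F_1=u_1$. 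This is exactly the hypothesis of the one-dimensional inequality for $F$, $G$, $C_{d-1}^{-1}H$, so the base case gives $\int H\geq C_1C_{d-1}(\int F)^{1-\alpha}(\int G)^\alpha$; by Fubini $\int H=\int\bar h$, $\int F=\int\bar f$, $\int G=\int\bar g$, and $C_1C_{d-1}=C_d$ delivers the result.

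The main obstacle --- and the reason a one-shot argument does not suffice --- is the passage from level sets to the integral with the sharp constant. Applying the $d$-dimensional Theorem \ref{nilbrunnmin} directly to $\{\bar f>t\}\ast\{\bar g>t\}\subseteq\{\bar h>t\}$ only gives $|\{\bar h>t\}|\geq(|\{\bar f>t\}|^{1/d}+|\{\bar g>t\}|^{1/d})^d\geq C_d|\{\bar f>t\}|^{1-\alpha}|\{\bar g>t\}|^\alpha$, after which integrating in $t$ and attempting to factor the right-hand side runs Hölder's inequality in the wrong direction; in fact the intermediate estimate so obtained is false, which is why the dimensional reduction above is essential rather than cosmetic. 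The remaining points are routine: the measurability of the slices and the validity of Fubini for the nonnegative functions involved, the consistency of the normalization across $\bar f,\bar g,\bar h$, and the observation that the translation constant $F_1$ (and the constant $F_2$ absorbed into $\tast$) merely shift level sets and hence affect no measure; I would also note that the product $\tast$ depends on the chosen pair $(z_1,w_1)$, but since every such $\tast$ has the form \eqref{esc}, the inductive hypothesis applies uniformly.
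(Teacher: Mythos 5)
Your proposal is correct, and its skeleton---induction on the dimension $d$, a one-dimensional base case obtained from superlevel sets, the multiplicative Brunn--Minkowski inequality and the weighted AM--GM inequality, followed by a Fubini slicing step---is the same as the paper's. The genuine difference is which coordinate you peel off and, consequently, over which class of objects the induction runs. The paper splits off the \emph{last} coordinate: for fixed $z',w'$ the map $z_d\mapsto z_d+w_d+P_d(z',w')$ is a translation, the one-dimensional case handles these fibres, and the quotient $\gg/\hh_1\cong(\real^{d-1},\ast')$ is again a nilpotent group, so the induction hypothesis is Theorem \ref{prelei} itself in dimension $d-1$. You split off the \emph{first} coordinate: the fibre product $\tast=\tast$ of \eqref{menprod} attached to a pair $(z_1,w_1)$ is in general not a group product, so you must strengthen the induction hypothesis to a Pr\'ekopa--Leindler inequality for \emph{all} products of the form \eqref{esc} --- which you do, and which is legitimate since each such $\tast$ is again of that form and the constant $C_{d-1}$ does not depend on which \eqref{esc} product is used. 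Your route buys a slightly more general statement (a Pr\'ekopa--Leindler inequality for arbitrary \eqref{esc} products, mirroring the relation between Theorem \ref{prodbrunmin} and Theorem \ref{nilbrunnmin}), at the cost of carrying the dependence of $\tast$ on $(z_1,w_1)$ through the argument, which you correctly flag as harmless. Two smaller remarks: your explicit normalization $\sup\bar f=\sup\bar g=1$ in the base case is in fact more careful than the paper's treatment, which applies $\mu(\{h>\lambda\})\ge\mu(\{f>\lambda\})+\mu(\{g>\lambda\})$ without ensuring both level sets are nonempty; and both arguments share the usual measurability caveat (slices of measurable functions are only measurable for almost every value of the frozen variable), which neither you nor the paper addresses but which is standard to repair.
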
	

\begin{proof}
%
	We reason by induction on $d$. 
	
	Let $d=1$ and $a\cdot b\in \{f>\lambda \}\cdot\{g>\lambda \}$. Then we have $h(a\cdot b)\geq f(a)^{1-\alpha}g(b)^{\alpha}>\lambda,$ and as a consequence
	\[
	\{h>\lambda\}\supset\{f>\lambda \}\cdot\{g>\lambda \}.
	\]
	Now we can apply Theorem \ref{nilbrunnmin},
	\[
	\mu(\{h>\lambda\})\geq\mu(\{f>\lambda \})+\mu(\{g>\lambda \}).
	\]
	Integrating in $\lambda$ and using Cavalieri's Principle,
	\begin{multline}\label{1}
	\int_G hd\mu=\int_0^\infty \mu(\{h>\lambda\})d\lambda\geq\\
	 \int_0^\infty\big( \mu(\{f>\lambda \})+ \mu(\{g>\lambda \})\big) d\lambda=\int_G fd\mu+\int_G gd\mu.
	\end{multline}
	Now we use the weighted inequality between the geometric and arithmetic means,
	\begin{equation}\label{3}
	\int_G fd\mu+\int_G gd\mu\geq\left( \frac{\int_G fd\mu}{1-\alpha}\right)^{1-\alpha}\left(\frac{\int_G fd\mu}{\alpha}\right)^{\alpha}.
	\end{equation}
	From the equations \eqref{1} and \eqref{3} we have \eqref{pre-lei}.
	
	Suppose that Theorem \ref{prelei} holds for $d-1$. We shall prove the inequality \eqref{3} for the functions $f,g,h$ composed with $\exp$ and use Proposition \ref{mappreserving}. Let $z'=(z_1, \ldots,z_{d-1})$, $w'=(w_1,\ldots,w_{d-1})\in\real^{d-1}$. 
	 By \eqref{1.130}, we can write $(z',z_d)\ast (w',w_d)=(z'\ast'w',z_d+w_d+P_d(z',w'))$. Recall that $\real^d$ is isomorphic to $\gg$ once we fix the strong Malcev basis $\{X_1,\ldots,X_d\}$, and $X_d$ spans an ideal $\hh_1$ in $\gg$. Thus $\gg/\hh_1\cong(\real^{d-1},\ast')$ is a nilpotent group. Now we define the functions $\tilde{f},\tilde{g},\tilde{h}:\real\rightarrow \real^+_ 0$ by 
		 \begin{equation*}
	 \begin{split}
	 &\tilde{f}(z_d)=(f\circ\exp)(z',z_d),\\
	&\tilde{g}(w_d)=(g\circ\exp)(w',w_d), \\
	&\tilde{h}(t)=(h\circ\exp)(z'\ast' w',t+P_d(z',w')).
	 \end{split}
	 \end{equation*} 
	 Let us see that these functions verify \eqref{PL}:
	\begin{multline}\label{sos}
	\tilde{h}(z_d+w_d)=(h\circ\exp)((z',z_d)\ast (w',w_d))=h(\exp(z',z_d)\cdot\exp(w',w_d))\\
	\geq (f\circ\exp)(z',z_d)(g\circ\exp)(w',w_d)=\tilde{f}(z_d)\tilde{g}(w_d).
	\end{multline}
	 By induction hypothesis,
	 \begin{equation}\label{tildes}
	 \int_{\real} \tilde{h}(t)dt\geq \frac{1}{(1-\alpha)^{(1-\alpha)}\alpha^{\alpha}}\left(\int_{\real} \tilde{f}(z_d)dz_d\right)^{1-\alpha}\left(\int_{\real} \tilde{g}(w_d) dw_d\right)^{\alpha}. 	
	 \end{equation}
	By the invariance of the $1$-dimensional Lebesgue measure by translations we get
	 \begin{equation}\label{otramas}
	 \int_{\real}(h\circ\exp)(z'\ast' w',t)dt=\int_{\real}\tilde{h}(t)dt.
	 \end{equation}
	 The inequality \eqref{sos} is valid for any $z',w'\in\real^{d-1}$, and we can define the functions $F,G,H:\real^{d-1}\rightarrow\real^{+}_0$
	 \begin{equation}\label{HFG}
	 \begin{split}
	 &F(z')=\frac{1}{(1-\alpha)}\int_{\real} \tilde{f}(z_d)dz_d\\
	&G(w')=\frac{1}{\alpha}\int_{\real} \tilde{g}(w_d)dw_d\\
	&H(z')=\int_{\real} (h\circ\exp)(z',t)dt.
	 \end{split}
	 \end{equation}
	 Applying \eqref{otramas} we can rewrite \eqref{tildes} as
	 \begin{equation*}
	 H(z'\ast' w')=\int_{\real}\tilde{h}(t)dt\geq F(z')^{1-\alpha}G(w')^{\alpha} \ \ \forall z',w'\in\real^{d-1},
	 \end{equation*}
	 and again by the induction hypothesis, we get	 
	\begin{multline*}
	\int_{\real^{d-1}} H(z')dz'\geq\\
	\frac{1}{(1-\alpha)^{(d-1)(1-\alpha)}\alpha^{(d-1)\alpha}}\left(  \int_{\real^{d-1}}F(z')dz'\right)^{1-\alpha} \left(     \int_{\real^{d-1}}G(w')dw'\right) ^{\alpha}.
	\end{multline*}
	The Theorem follows from Fubini's Theorem and Proposition \ref{mappreserving}.	
\end{proof}

The Prékopa-Leindler inequality in $\real^d$ is usually stated using $h((1-\alpha)x+\alpha y)$ instead of $h(x+y)$ in order to eliminate the factor $((1-\alpha)^{d(1-\alpha)}\alpha^{d\alpha})^{-1}.$ This can be done when dilations are defined, and in this case, this inequality take a more pleasant expression.

\begin{corollary}\label{preleistrat}
	Let $G$ be a stratifiable group with topological dimension $d$ and homogeneous dimension $Q$.
	Let $f,g,h:G\rightarrow\real^{+}_0$ be measurable functions, and $0<\alpha<1$ verifying
	\begin{equation*} h(\delta_{(1-\alpha)}a\cdot\delta_{\alpha} b)\geq f(a)^{1-\alpha}g(b)^\alpha \ \ \forall a,b\in G.\end{equation*}
	Then 
	\begin{equation*}
	\int_G\,h d\mu \geq (1-\alpha)^{(Q-d)(1-\alpha)}\alpha^{(Q-d)\alpha} \left(\int_G fd\mu\right)^{1-\alpha}\left(\int_G g d\mu\right)^{\alpha}. 	
	\end{equation*}
\end{corollary}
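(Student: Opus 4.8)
The plan is to derive this directly from the Prékopa--Leindler inequality for nilpotent groups, Theorem~\ref{prelei}, by absorbing the two dilations into the functions $f$ and $g$ and then correcting the resulting integrals with the homogeneity of the Haar measure recorded in Proposition~\ref{strhaar}. Recall that the group dilations satisfy $\delta_\lambda\circ\delta_\mu=\delta_{\lambda\mu}$ and $\delta_1=\mathrm{id}$, since this already holds for the algebra dilations on each $V_t$ and passes to $G$ by uniqueness of the automorphism with prescribed differential; in particular each $\delta_\lambda$ is a diffeomorphism, so composing with it preserves measurability.

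First I would introduce the auxiliary functions
\begin{equation*}
\tilde{f}=f\circ\delta_{1/(1-\alpha)},\qquad \tilde{g}=g\circ\delta_{1/\alpha}.
\end{equation*}
Writing $u=\delta_{1-\alpha}a$ and $v=\delta_{\alpha}b$, so that $a=\delta_{1/(1-\alpha)}u$ and $b=\delta_{1/\alpha}v$, the hypothesis of the corollary rewrites as
\begin{equation*}
h(u\cdot v)\geq f\big(\delta_{1/(1-\alpha)}u\big)^{1-\alpha}g\big(\delta_{1/\alpha}v\big)^{\alpha}=\tilde{f}(u)^{1-\alpha}\tilde{g}(v)^{\alpha}\quad\forall u,v\in G,
\end{equation*}
which is exactly the hypothesis \eqref{PL} of Theorem~\ref{prelei} for the triple $\tilde{f},\tilde{g},h$. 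Applying that theorem gives
\begin{equation*}
\int_G h\,d\mu\geq\frac{1}{(1-\alpha)^{d(1-\alpha)}\alpha^{d\alpha}}\left(\int_G\tilde{f}\,d\mu\right)^{1-\alpha}\left(\int_G\tilde{g}\,d\mu\right)^{\alpha}.
\end{equation*}

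It then remains to express the two integrals on the right in terms of those of $f$ and $g$. By Proposition~\ref{strhaar} with $\lambda=1/(1-\alpha)$,
\begin{equation*}
\int_G\tilde{f}\,d\mu=\int_G\big(f\circ\delta_{1/(1-\alpha)}\big)\,d\mu=(1-\alpha)^{Q}\int_G f\,d\mu,
\end{equation*}
and likewise $\int_G\tilde{g}\,d\mu=\alpha^{Q}\int_G g\,d\mu$. Substituting these, the factors $(1-\alpha)^{Q(1-\alpha)}$ and $\alpha^{Q\alpha}$ produced by the integrals combine with the prefactor $(1-\alpha)^{-d(1-\alpha)}\alpha^{-d\alpha}$ from Theorem~\ref{prelei} to leave precisely $(1-\alpha)^{(Q-d)(1-\alpha)}\alpha^{(Q-d)\alpha}$, which is the stated constant.

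There is no serious obstacle here, as the whole argument is a single change of variables. The only point requiring care is the exact bookkeeping of the exponents: one must read the direction of Proposition~\ref{strhaar} correctly so that $\lambda=1/(1-\alpha)$ yields the factor $(1-\alpha)^{Q}$ rather than its reciprocal, and then verify that these homogeneity factors cancel the unwanted topological-dimension constant down to the homogeneous-dimension constant $(1-\alpha)^{(Q-d)(1-\alpha)}\alpha^{(Q-d)\alpha}$ asserted in the corollary.
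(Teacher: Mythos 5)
Your proposal is correct and follows essentially the same route as the paper: absorb the dilations into $f$ and $g$ (the paper writes $f_{1-\alpha}(a)=f(a/(1-\alpha))$ for your $\tilde f=f\circ\delta_{1/(1-\alpha)}$), apply Theorem~\ref{prelei} to the modified triple, and then use Proposition~\ref{strhaar} to trade the factors $(1-\alpha)^{Q}$ and $\alpha^{Q}$ against the topological-dimension constant. Your bookkeeping of the exponents and the direction of the homogeneity identity both match the paper's computation.
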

\begin{proof}
	We denote  $a'=(1-\alpha) a$, $b'=\alpha b$, $f_{1-\alpha}(a)=f(\frac{a}{1-\alpha})$ and $g_\alpha(a)=g(\frac{a}{\alpha})$. Then we have
	$$h(a'\cdot b')\geq f(a)^{1-\alpha}g(b)^\alpha=f\left(\frac{a'}{1-\alpha}\right)^{1-\alpha}g\left(\frac{b'}{\alpha}\right)^{\alpha}=f_{1-\alpha}(a')^{1-\alpha} g_{\alpha}(b')^\alpha.$$
	By Theorem \ref{pre-lei}, we have
	$$\int_G hd\mu\geq\frac{1}{(1-\alpha)^{d(1-\alpha)}\alpha^{d\alpha}}\left(\int_G f_{1-\alpha}d\mu\right)^{1-\alpha}\left(\int_G g_\alpha d\mu\right)^{\alpha}.$$
	Using now Proposition \ref{strhaar},
	$$\int_G f_{1-\alpha}(a)d\mu(a)=\int_G f\left(\frac{a}{1-\alpha}\right)d\mu(a)=(1-\alpha)^Q\int_G f(a')d\mu(a'),$$
	and after using Proposition \ref{pre-lei} for the integral of $g_\alpha$, we obtain
	\[\int_G hd\mu\geq(1-\alpha)^{(Q-d)(1-\alpha)}\alpha^{(Q-d)\alpha}\left(\int_G fd\mu\right)^{1-\alpha}\left(\int_G gd\mu\right)^{\alpha}.\qedhere\]
\end{proof}

As we can find in \cite{MR3155183}, there are several equivalent statements for the Brunn-Minkowski inequality in Euclidean space.
\begin{corollary}[Multiplicative Brunn-Minkowski inequalities in Carnot groups]
	Let $G$ be a Carnot group with topological dimension $d$ and homogeneous dimension $Q$. Let $A,B\subset G$ be measurable sets such that $A\cdot B$ is measurable, and $0<\alpha<1$. Then
	\begin{equation*}
	\begin{split}
	&|\delta_{(1-\alpha)}A\cdot\delta_{\alpha}B|^{1/d}\geq (1-\alpha)^{Q/d}|A|^{1/d}+\alpha^{Q/d}|B|^{1/d}. \\
	&|\delta_{(1-\alpha)}A\cdot \delta_{\alpha} B|\geq (1-\alpha)^{(Q-d)(1-\alpha)}\alpha^{(Q-d)\alpha}|A|^{1-\alpha}|B|^{\alpha}.
	\end{split}
	\end{equation*}
\end{corollary}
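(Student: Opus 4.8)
The plan is to obtain both inequalities from the nilpotent Brunn--Minkowski inequality (Theorem~\ref{nilbrunnmin}) together with the scaling behaviour of the Haar measure under dilations (Proposition~\ref{strhaar}), deducing the second inequality from the first by the weighted arithmetic--geometric mean inequality, exactly as in the Euclidean equivalences recorded in \cite{MR3155183}. A Carnot group is in particular nilpotent, so Theorem~\ref{nilbrunnmin} applies, and since each $\delta_\lambda$ is a Lie group automorphism (hence a diffeomorphism) it preserves measurability, so $\delta_{(1-\alpha)}A$ and $\delta_{\alpha}B$ are measurable whenever $A,B$ are.

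First I would record how $\mu$ transforms under a dilation. Taking $f=\chi_A$ in Proposition~\ref{strhaar} and using that $\delta_\lambda$ has inverse $\delta_{1/\lambda}$, so that $\chi_A\circ\delta_\lambda=\chi_{\delta_{1/\lambda}(A)}$, one gets $|A|=\lambda^Q|\delta_{1/\lambda}(A)|$; replacing $\lambda$ by $1/\lambda$ yields
\[
|\delta_\lambda(A)|=\lambda^Q|A|\qquad(\lambda>0).
\]
For the first inequality I would apply Theorem~\ref{nilbrunnmin} to the sets $\delta_{(1-\alpha)}A$ and $\delta_\alpha B$, obtaining
\[
|\delta_{(1-\alpha)}A\cdot\delta_\alpha B|^{1/d}\geq|\delta_{(1-\alpha)}A|^{1/d}+|\delta_\alpha B|^{1/d},
\]
and then substitute $|\delta_{(1-\alpha)}A|=(1-\alpha)^Q|A|$ and $|\delta_\alpha B|=\alpha^Q|B|$ from the scaling identity; this gives the first claimed inequality directly.

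For the second inequality I would feed the first one into the weighted AM--GM inequality with weights $1-\alpha$ and $\alpha$. Writing the right-hand side of the first inequality as $(1-\alpha)X+\alpha Y$ with $X=(1-\alpha)^{Q/d-1}|A|^{1/d}$ and $Y=\alpha^{Q/d-1}|B|^{1/d}$, the bound $(1-\alpha)X+\alpha Y\geq X^{1-\alpha}Y^{\alpha}$ gives
\[
|\delta_{(1-\alpha)}A\cdot\delta_\alpha B|^{1/d}\geq(1-\alpha)^{(Q/d-1)(1-\alpha)}\alpha^{(Q/d-1)\alpha}|A|^{(1-\alpha)/d}|B|^{\alpha/d},
\]
and raising both sides to the $d$-th power, using $(Q/d-1)d=Q-d$, yields the second claimed inequality.

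The argument is essentially bookkeeping; the only point requiring care is the scaling step, namely getting the dilation exponent right (including the inversion $\delta_\lambda^{-1}=\delta_{1/\lambda}$) and, in the final step, choosing $X,Y$ so that the powers of $1-\alpha$ and $\alpha$ combine to the exponent $Q-d$. I expect no genuine obstacle beyond this. Alternatively, the second inequality can be obtained directly by applying Corollary~\ref{preleistrat} to $f=\chi_A$, $g=\chi_B$ and $h=\chi_{\delta_{(1-\alpha)}A\cdot\delta_\alpha B}$, whose hypothesis $h(\delta_{(1-\alpha)}a\cdot\delta_\alpha b)\geq\chi_A(a)^{1-\alpha}\chi_B(b)^{\alpha}$ holds since the right-hand side equals $\chi_A(a)\chi_B(b)$, which is $1$ precisely when $a\in A$ and $b\in B$, in which case $\delta_{(1-\alpha)}a\cdot\delta_\alpha b\in\delta_{(1-\alpha)}A\cdot\delta_\alpha B$.
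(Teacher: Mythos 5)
Your argument is correct, and for the first inequality it coincides with the paper's proof: apply Theorem~\ref{nilbrunnmin} to $\delta_{(1-\alpha)}A$ and $\delta_{\alpha}B$ and use Proposition~\ref{strhaar} (via the scaling identity $|\delta_\lambda(A)|=\lambda^{Q}|A|$, which you derive carefully, including the inversion $\delta_\lambda^{-1}=\delta_{1/\lambda}$). For the second inequality your primary route differs from the paper's: you deduce it from the first inequality by writing its right-hand side as $(1-\alpha)X+\alpha Y$ with $X=(1-\alpha)^{Q/d-1}|A|^{1/d}$, $Y=\alpha^{Q/d-1}|B|^{1/d}$ and applying the weighted arithmetic--geometric mean inequality; the exponent bookkeeping $(Q/d-1)d=Q-d$ checks out. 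The paper instead obtains the second inequality by applying Corollary~\ref{preleistrat} to $f=\chi_A$, $g=\chi_B$, $h=\chi_{\delta_{(1-\alpha)}A\cdot\delta_{\alpha}B}$ --- exactly the alternative you sketch at the end. Your AM--GM route is the more elementary one, since it needs nothing beyond the first inequality and reproduces the standard Euclidean equivalence between the $1/d$-form and the multiplicative form of Brunn--Minkowski; the paper's route leans on the already-established Pr\'ekopa--Leindler machinery and so is shorter to state but heavier. Either is acceptable. A minor point inherited from the statement itself (and not resolved by the paper's proof either): Theorem~\ref{nilbrunnmin} is applied to the sets $\delta_{(1-\alpha)}A$ and $\delta_{\alpha}B$, so strictly one needs $\delta_{(1-\alpha)}A\cdot\delta_{\alpha}B$ to be measurable, not just $A\cdot B$; your observation that dilations are diffeomorphisms handles the measurability of the factors but not of their product.
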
	
\begin{proof}
	We use Theorem \ref{nilbrunnmin} with the sets 	$\delta_{(1-\alpha)}A$ and $\delta_{\alpha}B$, and from Proposition \ref{strhaar} we get the first inequality.
	
	For the second one, we take $f=\chi_A$, $g=\chi_B$ and $h=\chi_{\delta_{(1-\alpha)}A\cdot\delta_{\alpha} B}$ and apply Corollary \ref{preleistrat}, obtaining the result. 
\end{proof}
%
%

	\bibliographystyle{abbrv}

\bibliography{Brunn-Minkowski}

\end{document}